\documentclass[11pt]{article}
\usepackage{graphicx}
\usepackage{epstopdf}
\usepackage{amssymb}
\usepackage{amsmath}
\usepackage{amsthm}
\usepackage{color}
\usepackage[margin=1in]{geometry}
\usepackage[hidelinks]{hyperref}
\usepackage{booktabs}

\numberwithin{equation}{section}

\newcommand{\be}{\begin{equation}}
\newcommand{\ee}{\end{equation}}
\newcommand{\ba}{\begin{array}}
\newcommand{\ea}{\end{array}}
\newcommand{\bea}{\begin{eqnarray}}
\newcommand{\eea}{\end{eqnarray}}
\newcommand{\beas}{\begin{eqnarray*}}
\newcommand{\eeas}{\end{eqnarray*}}
\newtheorem{theorem}{Theorem}[section]
\newtheorem{lemma}{Lemma}[section]
\newtheorem{proposition}{Proposition}[section]

\newtheorem{assumption}{Assumption}[section]

\title{A WKB-based fixed-grid method for capturing trait concentration in a dispersal evolution model}

\author{
Weijie Huang\thanks{School of Mathematics and Statistics, Beijing Jiaotong University, Beijing 100044, People's Republic of China; Beijing Key Laboratory of Biological Big Data and Topological Statistics, Beijing Jiaotong University, Beijing 100044, People's Republic of China {\tt (wjhuang@bjtu.edu.cn)}} \ and
Xinran Ruan\thanks{(Corresponding author) School of Mathematical Sciences, Capital Normal University, Beijing, China, 100048, People's Republic of China {\tt (xinran.ruan@cnu.edu.cn)}.}
}

\begin{document}
\date{}
\maketitle

\begin{abstract}
The evolution of dispersal traits is a fundamental topic in evolutionary ecology, 
where natural selection may drive the trait distribution toward concentration in the rare-mutation regime. 
This singular behavior poses a serious numerical difficulty, 
since direct discretizations of the population density require very fine trait grids to identify the fittest trait and to resolve the concentrated profile accurately. 
In this paper, we develop a WKB-based numerical framework for a dispersal evolution model. 
By separating the exponentially concentrated trait dependence from a smoother
amplitude variable and combining this WKB representation with dual trait-grid implementation and other specially designed techniques, the method recovers the selected trait and
the associated concentration structure accurately and efficiently on fixed trait grids. 
We establish a semi-discrete stationary fixed-grid asymptotic-preserving structure for the rare-mutation limit of the steady-state problem.
Numerical experiments compare the proposed method with
direct density discretizations and confirm its advantage in the small-mutation
regime.
\end{abstract}

{\bf Key words. } dispersal evolution, rare-mutation limit, trait concentration, WKB reformulation, Hamilton--Jacobi limit, asymptotic-preserving structure.

{\bf MSC 2020. } 65M06, 65M12, 35B25, 35F21, 92D15.

%===================================================================================================
%	Introduction
%===================================================================================================
\section{Introduction}

Adaptive dynamics studies the evolution of phenotypic traits under mutation, selection, and ecological feedback. In structured population models, natural selection is often manifested by the concentration of the population density on one or several fittest traits. In the rare-mutation regime, such concentration is usually described by a measure-valued limit in the trait variable, and the support of the limiting measure represents the selected phenotype. This viewpoint has been developed in selection-mutation models and in the Hamilton--Jacobi approach to adaptive dynamics \cite{BarlesPerthame, DiekmannJabinMischlerPerthame}. Among these problems, the evolution of dispersal is a particularly important example. In spatially heterogeneous environments, different dispersal rates may lead to different fitness advantages, and the selected strategy depends on the interaction between movement, resource distribution, and population competition. This question has been studied from several viewpoints in mathematical ecology, for example in reaction-diffusion models for slow dispersal, ideal free distributions, and spatial sorting \cite{DockeryHutsonMischaikowPernarowski1998, Hastings1983,Ronce2007}.

In this work, we consider a representative dispersal evolution model of the form 
\begin{equation}\label{eq:n_intro} 
	\varepsilon \partial_t n_\varepsilon -D(\theta)\Delta_{\mathbf{x}} n_\varepsilon -\varepsilon^2\partial_{\theta\theta} n_\varepsilon = n_\varepsilon\bigl(K(\mathbf{x})-\rho_\varepsilon(t, \mathbf{x})\bigr), 
\end{equation} 
where $\mathbf{x}\in\Omega\subset \mathbb{R}^d, \, d=1,2,3, \, \theta \in \mathbb{T}=(0,1].$
Here \(t\) denotes time, \(\mathbf{x}\) is the spatial variable, and \(\theta\) is the phenotypic trait. The unknown
\(n_\varepsilon(t,\mathbf{x},\theta)\) is the structured population density, and 
\begin{equation} \label{def:rho_P}
	\rho_\varepsilon(t,\mathbf{x}) = \int_{\mathbb T} n_\varepsilon(t,\mathbf{x},\theta)\,d\theta, \quad 
	P_\varepsilon(t,\theta) = \int_\Omega n_\varepsilon(t,\mathbf{x},\theta)\,d\mathbf{x},  
\end{equation}
denote the population density in space and the trait marginal, respectively. 
The concentration of \(P_\varepsilon\) in the rare-mutation regime is used to identify the selected trait. 
The function \(K(\mathbf{x})\) represents the local carrying capacity, while the trait-dependent diffusivity \(D(\theta)\) models the dispersal rate associated with phenotype \(\theta\). Throughout the paper, we assume that the dispersal rate is positive and periodic in the trait variable and that the carrying capacity is nonnegative and bounded \cite{PerthameSouganidis}
 \begin{equation}\label{eq:basic_assumptions} 
 	0<D_{\min}\le D(\theta)\le D_{\max}<\infty, \qquad 0\le K(\mathbf{x})\le K_{\max}. 
\end{equation} 
We impose homogeneous Neumann boundary conditions on \(\partial\Omega\) and periodic boundary conditions in the trait direction. The time-dependent model \eqref{eq:n_intro} is used here as an evolutionary relaxation toward the selection state, while the asymptotic structure of interest is motivated by the rare-mutation limit of the associated steady-state problem.

The steady-state version of this dispersal evolution model was studied by
B. Perthame and P. E. Souganidis in \cite{PerthameSouganidis}.  Their work provides a
rare-mutation description of the selected dispersal trait for a population
structured by space and by a continuous dispersal trait.  In the limit of
vanishing mutations, the population concentrates in the trait variable, while
the spatial profile remains nontrivial.  This result gives the asymptotic
background for the present paper and clarifies the limiting selection structure
that the numerical method aims to capture.

This dispersal model fits into a broader class of structured selection models with concentration in the small-mutation limit. For purely trait-structured or nonlocal selection models, Dirac concentration and constrained Hamilton--Jacobi equations have been studied in \cite{DesvillettesJabinMischlerRaoul} and \cite{LorzMirrahimiPerthame}. When space is included as an additional structuring variable, the analysis becomes more delicate. 
Related Hamilton--Jacobi limits for populations structured by space and trait were obtained in \cite{BouinMirrahimi}, and an asymptotic analysis of a selection model with space was given in \cite{MirrahimiPerthame}. 
The Perthame--Souganidis dispersal model has also motivated later studies on dispersal evolution, conditional dispersal, and the stability of Dirac concentrations \cite{HaoLamLou2019, Lam2017,LamLouPerthame2023}.

The same concentration mechanism creates a severe numerical difficulty.  Direct
discretizations of the original density must resolve a trait profile whose
width shrinks with the mutation scale.  Hence the mesh required in the trait
direction may become prohibitively fine when \(\varepsilon\) is small.  This
suggests that the numerical method should exploit the asymptotic separation
between the singular trait dependence and the smoother amplitude, rather than
resolve the concentrated density directly.

This is closely related to the idea of asymptotic-preserving schemes, whose
goal is to capture the limiting regime on discretizations that do not resolve
the small scale \(\varepsilon\) \cite{JinAP}.  In rare-mutation problems, the
natural asymptotic separation is provided by a Wentzel--Kramers--Brillouin (WKB) \cite{BenderOrszag1999WKB} or Hopf--Cole
transformation \cite{BarlesPerthame2008}.  The density is represented by an exponential phase, which
describes the concentration in the trait variable, multiplied by a smoother
amplitude.  Such a WKB viewpoint is standard in the Hamilton--Jacobi approach
to adaptive dynamics \cite{BarlesPerthame} and is also central in the
dispersal model of \cite{PerthameSouganidis}.

WKB-type transformations have been used as numerical tools in different multiscale regimes. For highly oscillatory problems, such as semiclassical Schr\"odinger equations, the exponential phase is typically imaginary or complex-valued, and the WKB transformation separates the rapid oscillations from smoother amplitude variables \cite{ArnoldAbdallahNegulescu2011,KoernerArnoldDoepfner2022}. In selection-mutation problems, by contrast, the phase is real-valued and the small parameter leads to concentration rather than oscillation. In adaptive dynamics, a WKB representation has been used to construct an asymptotic-preserving scheme for capturing Dirac concentrations on coarse, \(\varepsilon\)-independent phenotype meshes \cite{AlmeidaPerthameRuan}. Related Hopf--Cole based  ideas have been developed for kinetic reaction--transport equations in singular front propagation regimes \cite{Hivert2018}, and for kinetic BGK models in the hyperbolic limit \cite{LuoPayne2017}. These works motivate the WKB-based formulation adopted here for the dispersal evolution model.

The present paper develops a WKB-based fixed-grid numerical framework for the
dispersal evolution problem \eqref{eq:n_intro}.  Starting from the
phase-amplitude representation, we derive a semi-discrete scheme for the phase
and amplitude variables and prove the positivity of the amplitude.  We then
establish a stationary fixed-grid asymptotic-preserving structure associated with the
rare-mutation limit of the steady-state problem under a one-sided remainder
stability condition.  For computation, we propose a dual trait-grid
implementation.  The one-dimensional phase is resolved on a fine trait grid,
whereas the space-dependent amplitude is evolved on a coarser trait grid.  This
separation preserves the main advantage of the WKB formulation and reduces the
dominant cost associated with the space-dependent unknown.

The paper is organized as follows.  Section~\ref{sec:proliminary} recalls the WKB ansatz and the
steady-state rare-mutation structure that motivates the numerical formulation.
Section~\ref{sec:WKB_scheme} presents the semi-discrete WKB scheme, proves positivity of the
amplitude, and establishes the stationary fixed-grid asymptotic-preserving structure.  Section~\ref{sec:numerics}
describes the fully discrete implementation used in the computations and
presents numerical experiments comparing the WKB method with direct density
discretizations in the small-mutation regime.  The appendices collect the
auxiliary numerical Hamiltonians, fluxes, and remainder estimates used in the
analysis.  
Finally, conclusions are drawn in Section~\ref{sec:conclusion}.

%===================================================================================================
%   Preliminary results
%===================================================================================================
\section{Preliminaries}\label{sec:proliminary}
This section summarizes the continuous asymptotic structure that motivates the WKB reformulation and the subsequent numerical discretization.
\subsection{WKB ansatz and reformulated system}
For simplicity, we restrict the presentation to the one-dimensional spatial case, i.e. $x\in\Omega\subset\mathbb{R}$. 
To capture the concentration phenomenon in the trait variable, we introduce the classical WKB ansatz
\begin{equation}\label{eq:WKB}
   n_\varepsilon(t, x,\theta)
   = W_\varepsilon(t, x,\theta)\,
   \exp\!\left(\frac{u_\varepsilon(t,\theta)}{\varepsilon}\right).
\end{equation}
This decomposition separates the exponentially varying part in the trait direction from a comparatively smoother amplitude. It is therefore natural for both the continuous asymptotic analysis and the numerical design in the small-mutation regime.

Substituting \eqref{eq:WKB} into \eqref{eq:n_intro} and assigning the leading trait-direction exponential contributions to the phase function, we obtain the reformulated system
\begin{equation}\label{eq:WKB_reformulation0}
\left\{
\begin{aligned}
& \partial_t u_\varepsilon
- |\partial_\theta u_\varepsilon|^2
- \varepsilon \, \partial_{\theta\theta} u_\varepsilon
= - \mathcal H, \\
& \varepsilon \partial_t W_\varepsilon
- D(\theta)\Delta_x W_\varepsilon
- \varepsilon^2 \partial_{\theta\theta} W_\varepsilon
- 2\varepsilon \, \partial_\theta W_\varepsilon \, \partial_\theta u_\varepsilon
= W_\varepsilon\bigl(K(x)-\rho_\varepsilon(t,x)+\mathcal H \bigr). 
\end{aligned}
\right.
\end{equation}
Here \(\mathcal H\) is introduced as an effective Hamiltonian. 
As we can see later, a natural choice is given by the principal eigenvalue of an associated spatial eigenvalue problem \eqref{eq:eigen_N}. 
Using the identity
\[
\partial_\theta(W_\varepsilon\partial_\theta u_\varepsilon)
=
\partial_\theta W_\varepsilon\,\partial_\theta u_\varepsilon
+
W_\varepsilon\partial_{\theta\theta}u_\varepsilon,
\]
we rewrite the term  $-2\varepsilon\partial_\theta W_\varepsilon\,\partial_\theta u_\varepsilon$ in \eqref{eq:WKB_reformulation0} as $2\varepsilon\partial_\theta F_\varepsilon+2\varepsilon W_\varepsilon\partial_{\theta\theta}u_\varepsilon$, where $F_\varepsilon = - W_\varepsilon\partial_\theta u_\varepsilon$. 
Consequently, the system \eqref{eq:WKB_reformulation0} can also be written equivalently as
\begin{equation}\label{eq:WKB_reformulation}
\left\{
\begin{aligned}
& \partial_t u_\varepsilon
- |\partial_\theta u_\varepsilon|^2
- \varepsilon \, \partial_{\theta\theta} u_\varepsilon
= - \mathcal H, \\
& \varepsilon \partial_t W_\varepsilon
- D(\theta)\Delta_x W_\varepsilon
- \varepsilon^2 \partial_{\theta\theta} W_\varepsilon
+2\varepsilon\partial_\theta F_\varepsilon
=
W_\varepsilon
\bigl(
K(x)-\rho_\varepsilon(t, x)
+\mathcal H
-2\varepsilon\partial_{\theta\theta}u_\varepsilon
\bigr).
\end{aligned}
\right.
\end{equation}

\subsection{Continuous asymptotic structure of the steady state}\label{subsec:continuous_asymptotic_structure}

We next recall the steady-state asymptotic structure that motivates the numerical method. Let \((\bar n_\varepsilon,\bar \rho_\varepsilon)\) be a positive steady state of \eqref{eq:n_intro}. Then
\begin{equation}\label{eq:ss_n}
-D(\theta)\Delta_x \bar n_\varepsilon
-\varepsilon^2\partial_{\theta\theta} \bar n_\varepsilon
=
\bar n_\varepsilon\bigl(K(x)-\bar\rho_\varepsilon(x)\bigr),
\qquad
\bar\rho_\varepsilon(x)=\int_{\mathbb T} \bar n_\varepsilon(x,\theta)\,d\theta.
\end{equation}
As shown in \cite{PerthameSouganidis}, under standard assumptions on \(K\), \(D\), and the domain, the steady-state family admits several properties that are fundamental for the rare-mutation limit.

\begin{theorem}[Continuous asymptotic structure of the steady state]\label{prop:continuous_structure}
Assume that \(K\) is positive and nonconstant, that \(D\) is positive, periodic, and has a unique minimizer \(\theta_m\), and that the steady problem \eqref{eq:ss_n} admits a positive solution. Then the following properties hold at the continuous level.
\begin{enumerate}
\item There exists a constant \(C>0\), independent of \(\varepsilon\), such that
\begin{equation}\label{eq:rho_continuous_bound}
0\le \bar\rho_\varepsilon(x)\le C
\qquad \text{for all } x\in\Omega.
\end{equation}
Moreover, up to subsequences, \(\bar\rho_\varepsilon\) converges uniformly to a limit density \(\rho\).

\item If we write
\begin{equation}\label{eq:u_log_transform}
\bar n_\varepsilon(x,\theta)=\exp\!\left(\frac{\bar u_\varepsilon(x,\theta)}{\varepsilon}\right),
\end{equation}
then \(\bar u_\varepsilon\) is uniformly Lipschitz continuous in \(\theta\), and along subsequences
\begin{equation}\label{eq:u_limit_uniform}
\bar u_\varepsilon \to u
\qquad \text{uniformly in } (x,\theta),
\end{equation}
where the limit \(u\) is independent of \(x\), periodic in \(\theta\), and satisfies
\begin{equation}\label{eq:HJ_limit}
\begin{cases}
|\partial_\theta u(\theta)|^2 = \mathcal H\bigl(\theta,\rho(\cdot)\bigr),\\[1mm]
\displaystyle \max_{\theta\in\mathbb T} u(\theta)=0.
\end{cases}
\end{equation}

\item The effective Hamiltonian \(\mathcal H(\theta,\rho)\) is defined through the principal eigenvalue problem
\begin{equation}\label{eq:eigen_N}
\begin{cases}
-D(\theta)\Delta_x \mathcal N(x,\theta)
= \mathcal N(x,\theta)\bigl(K(x)-\rho(x)\bigr)
+ \mathcal N(x,\theta)\,\mathcal H\bigl(\theta,\rho(\cdot)\bigr), & x\in\Omega,\\
\partial_\nu \mathcal N = 0, & x\in\partial\Omega,
\end{cases}
\end{equation}
with a positive eigenfunction \(\mathcal N\), where $\nu$ denotes the outward unit normal  vector on $\partial \Omega$. Moreover, \(\mathcal H\) has the same monotonicity in \(\theta\) as \(D\), and hence
\begin{equation}\label{eq:H_min_zero}
\min_{\theta\in\mathbb T} \mathcal H\bigl(\theta,\rho(\cdot)\bigr)
=
\mathcal H\bigl(\theta_m,\rho(\cdot)\bigr)
=0.
\end{equation}

\item The steady state concentrates on the fittest trait in the limit \(\varepsilon\to0\). More precisely,
\begin{equation}\label{eq:continuous_dirac_limit}
\bar n_\varepsilon \rightharpoonup N_m(x)\,\delta(\theta-\theta_m),
\qquad
\bar\rho_\varepsilon \to N_m(x),
\end{equation}
where \(N_m\) solves the reduced Fisher-type problem
\begin{equation}\label{eq:Nm_problem}
\begin{cases}
-D(\theta_m)\Delta_x N_m = N_m\bigl(K(x)-N_m\bigr), & x\in\Omega,\\
\partial_\nu N_m = 0, & x\in\partial\Omega.
\end{cases}
\end{equation}
\end{enumerate}
\end{theorem}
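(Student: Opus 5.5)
This theorem restates the Perthame--Souganidis analysis of \eqref{eq:ss_n}, so the plan follows that route. I will go in four steps: a uniform bound on \(\bar\rho_\varepsilon\); Lipschitz bounds on the Hopf--Cole phase \(\bar u_\varepsilon\); passage to a viscosity Hamilton--Jacobi limit whose Hamiltonian is the principal eigenvalue of \eqref{eq:eigen_N}; and identification of the concentration point through the monotonicity of \(\mathcal H\).

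\textbf{Step 1: bounds on \(\bar\rho_\varepsilon\).} Integrating \eqref{eq:ss_n} in \(\theta\) removes the mutation term by periodicity. This gives
\[
-\Delta_x\!\int_{\mathbb T}D(\theta)\bar n_\varepsilon\,d\theta=\bar\rho_\varepsilon(K-\bar\rho_\varepsilon).
\]
Set \(\phi_\varepsilon=\int D\bar n_\varepsilon\), so that \(D_{\min}\bar\rho_\varepsilon\le\phi_\varepsilon\le D_{\max}\bar\rho_\varepsilon\). At a maximum point of \(\phi_\varepsilon\), which is Neumann-compatible via the Hopf lemma, we get \(\bar\rho_\varepsilon\le K_{\max}\) at that point. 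Hence \(\bar\rho_\varepsilon\le (D_{\max}/D_{\min})K_{\max}\) everywhere, which is \eqref{eq:rho_continuous_bound}. The right-hand side is then bounded in \(L^\infty\), so elliptic regularity for \(\phi_\varepsilon\) gives compactness. To get compactness of \(\bar\rho_\varepsilon\) itself, I would compare \(\bar\rho_\varepsilon\) with \(\phi_\varepsilon/D(\theta_m)\) after concentration, or use a Harnack inequality for \(\bar n_\varepsilon\) in \(x\). This yields uniform convergence along subsequences.

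\textbf{Step 2: Lipschitz bounds on \(\bar u_\varepsilon\).} Substituting \eqref{eq:u_log_transform} into \eqref{eq:ss_n} gives
\[
-\tfrac{D}{\varepsilon}\Delta_x\bar u_\varepsilon-\tfrac{D}{\varepsilon^2}|\partial_x\bar u_\varepsilon|^2-\varepsilon\partial_{\theta\theta}\bar u_\varepsilon-|\partial_\theta\bar u_\varepsilon|^2=K-\bar\rho_\varepsilon .
\]
The \(x\)-terms carry the large factor \(\varepsilon^{-1}\), and this is what forces the limit to be independent of \(x\). To see it, apply the Harnack inequality in \(x\) to \(\bar n_\varepsilon(\cdot,\theta)\), viewed as a solution of a uniformly elliptic equation with bounded potential. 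The trait term \(\varepsilon^2\partial_{\theta\theta}\) must be handled as a perturbation, for instance by working on the operator in \((x,\theta/\varepsilon)\). This gives \(\sup_x \bar n_\varepsilon\le C\inf_x\bar n_\varepsilon\), hence \(\operatorname{osc}_x\bar u_\varepsilon=O(\varepsilon)\).

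For the Lipschitz bound in \(\theta\), I would apply a Bernstein-type argument to \(|\partial_\theta\bar u_\varepsilon|^2\), differentiating the equation in \(\theta\); the term \(D'(\theta)\) is controlled using the Harnack bound. The integral constraint \(\int\!\!\int e^{\bar u_\varepsilon/\varepsilon}\) together with the bound on \(\rho\) gives \(\max\bar u_\varepsilon\to0\). Arzelà--Ascoli then yields \eqref{eq:u_limit_uniform}, and the normalization in \eqref{eq:HJ_limit}.

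\textbf{Step 3: the Hamilton--Jacobi limit.} This is the main obstacle, since the Hamiltonian arises from a cell-type eigenproblem in \(x\). I would use the perturbed test function method. Take a smooth \(\varphi\) such that \(u-\varphi\) has a strict maximum at \(\theta_0\), and test with
\[
\varphi(\theta)+\varepsilon\ln\mathcal N(x,\theta_0),
\]
where \(\mathcal N\) is the positive principal eigenfunction from \eqref{eq:eigen_N} with \(\rho\) the limit from Step 1. Its existence and the simplicity of the eigenvalue follow from Krein--Rutman. At the approximating maximum points, the \(x\)-terms reproduce \(-\mathcal H(\theta_0,\rho)\) up to \(o(1)\), using \(\bar\rho_\varepsilon\to\rho\) uniformly. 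This gives \(|\varphi'(\theta_0)|^2\le\mathcal H\); the symmetric argument gives the reverse inequality, which is \eqref{eq:HJ_limit}.

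\textbf{Step 4: monotonicity of \(\mathcal H\) and concentration.} The monotonicity of \(\mathcal H\) in \(\theta\) follows by writing \(\mathcal H\) as a Rayleigh quotient,
\[
\mathcal H(\theta)=\min_{\psi}\frac{\int D(\theta)|\psi_x|^2-(K-\rho)\psi^2}{\int\psi^2}.
\]
This is increasing in \(D\), strictly so when \(K-\rho\) is nonconstant. Since \(\max u=0\) is attained and \(|u'|^2=\mathcal H\ge0\) in the viscosity sense, \(\min\mathcal H\ge0\).

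For the reverse inequality, note that at a maximum point \(\theta^*\) of \(u\) the subsolution test with a constant gives \(\mathcal H(\theta^*)\le0\), so \(\min\mathcal H=0\). Moreover, \(\mathcal H>0\) wherever \(D>D(\theta_m)\) by strict monotonicity. Hence \(u<0\) away from \(\theta_m\), and \(\bar n_\varepsilon\) vanishes exponentially there. This gives concentration at \(\theta_m\), i.e. \eqref{eq:H_min_zero}.

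Passing to the limit in the \(\theta\)-integrated equation then gives the weak limit \(N_m\delta_{\theta_m}\), with \(\phi_\varepsilon\to D(\theta_m)N_m\). Finally, \(\mathcal H(\theta_m,N_m)=0\) with \(\mathcal N=N_m\) is exactly \eqref{eq:Nm_problem}, and uniqueness of the positive solution of that Fisher problem identifies \(\rho=N_m\) and removes the dependence on the subsequence.
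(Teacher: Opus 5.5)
The paper does not prove this theorem; it quotes Perthame--Souganidis, and your outline follows the strategy of that reference. As written, however, Step~4 has a genuine hole. Strict monotonicity in $d$ of the principal eigenvalue of $-d\Delta_x-(K-\rho)$ holds only if $K-\rho$ is nonconstant. If $K-\rho\equiv c$, the eigenfunction is constant and $\mathcal H(\theta,\rho)\equiv -c$ for every $\theta$.

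You never prove that $K-\rho$ is nonconstant, and the hypothesis that $K$ is nonconstant is used nowhere in your argument. So nothing in Steps~1--4 excludes the neutral scenario $\rho=K$, $\mathcal H(\cdot,\rho)\equiv0$, $u\equiv0$, with no selection at all. This is what actually happens for constant $K$, where $\bar n_\varepsilon\equiv K$ is a steady state.

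The missing step is short. Divide \eqref{eq:ss_n} by $D(\theta)$ and integrate in $\theta$; this is the continuous analogue of \eqref{eq:weighted_summed_density_stationary}. It gives $-\Delta_x\bar\rho_\varepsilon=\eta_\varepsilon(K-\bar\rho_\varepsilon)+\varepsilon^2\int_{\mathbb T}\bar n_\varepsilon\,\partial_{\theta\theta}(1/D)\,d\theta$, with $\eta_\varepsilon=\int_{\mathbb T}\bar n_\varepsilon/D\,d\theta\ge\bar\rho_\varepsilon/D_{\max}$. Suppose $K-\rho\equiv c$ and $\rho\not\equiv0$. The limit is $-\Delta_x\rho=c\,\eta$ with Neumann data, where $\eta\ge\rho/D_{\max}$ is a weak-$*$ limit of $\eta_\varepsilon$. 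Integrating over $\Omega$ forces $c=0$, so $\rho$ is constant, hence so is $K=\rho$, a contradiction. If instead $\rho\equiv0$, then $K-\rho=K$ is nonconstant anyway.

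Three further points need repair. First, non-extinction is missing. The upper bound on $\bar\rho_\varepsilon$ only gives $\limsup\max\bar u_\varepsilon\le0$, and nothing rules out $\rho\equiv0$, which also solves \eqref{eq:Nm_problem}. So both $\max u=0$ and the identification $\rho=N_m$ are unjustified. To fix this, divide \eqref{eq:ss_n} by $\bar n_\varepsilon$ and integrate over $\Omega\times\mathbb T$. The diffusion terms become $-\int\!\!\int\bigl(D|\nabla_x\bar n_\varepsilon|^2+\varepsilon^2|\partial_\theta\bar n_\varepsilon|^2\bigr)/\bar n_\varepsilon^2\le0$, hence $\int_\Omega\bar\rho_\varepsilon\ge\int_\Omega K>0$, which settles both issues.

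Second, the inequality in Step~3 is reversed. At the maximum point $(x_\varepsilon,\theta_\varepsilon)$ of $\bar u_\varepsilon-\varphi-\varepsilon\ln\mathcal N(\cdot,\theta_0)$, the equation gives $K-\bar\rho_\varepsilon\ge\frac{D(\theta_\varepsilon)}{D(\theta_0)}\bigl(K-\rho+\mathcal H(\theta_0,\rho)\bigr)-\varepsilon\varphi''-|\varphi'|^2$. Hence $|\varphi'(\theta_0)|^2\ge\mathcal H(\theta_0,\rho)$, i.e. $u$ solves $\mathcal H-|u'|^2=0$ in the viscosity sense dictated by the $-\varepsilon\partial_{\theta\theta}$ term. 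This is exactly the direction Step~4 needs to get $\mathcal H(\theta^*,\rho)\le0$ at maxima. Your Step~3 as stated would only give $\mathcal H(\theta^*,\rho)\ge0$ there.

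Third, the uniform convergence of $\bar\rho_\varepsilon$, which Step~3 uses, is only sketched. Comparing with $\phi_\varepsilon/D(\theta_m)$ after concentration is circular. The single-scale ratio bound $\sup_x\bar n_\varepsilon\le C\inf_x\bar n_\varepsilon$ gives no equicontinuity. The clean route is Harnack plus $W^{2,p}$ estimates in the stretched variables $(x,\theta/\varepsilon)$, giving $|\nabla_x\bar n_\varepsilon|+\varepsilon|\partial_\theta\bar n_\varepsilon|\le C\bar n_\varepsilon$. This yields $|\nabla_x\bar\rho_\varepsilon|\le C$ and, at the same time, $|\partial_\theta\bar u_\varepsilon|\le C$, so your Bernstein argument becomes unnecessary.
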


Theorem~\ref{prop:continuous_structure} recalls the continuous
rare-mutation structure that motivates the numerical construction.  It
shows that the selected trait is encoded by the limiting phase, while the
spatial profile is determined through an effective Hamiltonian. 
The discrete counterpart of this structure will be discussed later in Section~\ref{sec:WKB_scheme}.

%===================================================================================================
%   Detailed scheme
%===================================================================================================

\section{WKB reformulated numerical scheme}\label{sec:WKB_scheme}

In this section, we develop and analyze a semi-discrete numerical scheme for
the WKB reformulation \eqref{eq:WKB_reformulation}. The scheme is formulated
in terms of the phase variable \(u_\varepsilon\) and the amplitude variable
\(W_\varepsilon\), with the total density \(n_\varepsilon\) reconstructed from
the WKB representation.

\subsection{Semi-discrete formulation}

We work at the semi-discrete level in order to separate the discretization in
\((x,\theta)\) from the additional issue of time stepping. For clarity, we
present the scheme in one spatial dimension with uniform grids in the
\(x\)- and \(\theta\)-directions. This choice is not essential for the WKB
decomposition, but it keeps the notation transparent.

Let \(\{x_j\}_{j=0}^{N_x}\) be a uniform grid on \(\Omega=(a,b)\) and let
\(\{\theta_k\}_{k=1}^{N_\theta}\) be a uniform periodic grid on \(\mathbb T=(0,1]\),
with mesh size $\Delta x$ and \(\Delta\theta\). We denote
\[
W_{j,k}^\varepsilon(t)\approx W_\varepsilon(t,x_j,\theta_k),
\qquad
u_k^\varepsilon(t)\approx u_\varepsilon(t,\theta_k).
\]
For notational convenience, we write
\[
D_k=D(\theta_k), \quad k = 1,2,\cdots, N_\theta,
\quad
K_j=K(x_j), \quad j=0, 1,\cdots, N_x. 
\] 
By \eqref{eq:basic_assumptions}, we have 
\[
    0<D_{\min}\le D_k\le D_{\max}<\infty,
    \qquad
    0\le K_j\le K_{\max}.
\]

For a grid function \(V=\{V_{j,k}\}\), we use the standard second-order
difference operators
\[
\delta_x^2 V_{j,k}
=
\frac{V_{j+1,k}-2V_{j,k}+V_{j-1,k}}{(\Delta x)^2},
\qquad
\delta_\theta^2 V_{j,k}
=
\frac{V_{j,k+1}-2V_{j,k}+V_{j,k-1}}{(\Delta\theta)^2}.
\]
The homogeneous Neumann boundary condition in \(x\) is imposed by the ghost
values
\[
V_{-1,k}=V_{1,k},
\qquad
V_{N_x+1,k}=V_{N_x-1,k},
\]
while all indices in the \(\theta\)-direction are understood periodically.
For the phase variable, we also use
\[
\delta_\theta^-u_k
=
\frac{u_k-u_{k-1}}{\Delta\theta},
\qquad
\delta_\theta^+u_k
=
\frac{u_{k+1}-u_k}{\Delta\theta}.
\]

The semi-discrete WKB system involves two numerical ingredients that are not
contained in the standard second-order difference operators introduced above.
The first one is the numerical Hamiltonian for the phase equation. The
Hamilton--Jacobi term \(-|\partial_\theta u_\varepsilon|^2\) in \eqref{eq:WKB_reformulation} is approximated by
\[
\widehat H(\delta_\theta^-u_k^\varepsilon,\delta_\theta^+u_k^\varepsilon).
\]
We assume that \(\widehat H\) is locally Lipschitz and consistent with the
continuous Hamiltonian, namely
\[
\widehat H(p,p)=-p^2 .
\]
We also assume that it is monotone in the sense that it is nondecreasing with
respect to its first argument and nonincreasing with respect to its second
argument. When \(\widehat H\) is differentiable, this condition reads
\[
\partial_a\widehat H(a,b)\ge0,
\qquad
\partial_b\widehat H(a,b)\le0 .
\]
One typical example is the Godunov numerical Hamiltonian,
shown in Appendix~\ref{app:hamiltonian_flux}.

%The second ingredient is the conservative discretization of the transport
%coupling in the amplitude equation. We use \(\widehat{\mathcal F}^{\varepsilon}\) to approximate the flux
%\(F_\varepsilon= -W_\varepsilon\partial_\theta u_\varepsilon\) and denote 
%\[
%\mathcal D_\theta\widehat{\mathcal F}_{j,k}^\varepsilon
%\]
%to be a conservative and consistent discretization of \(\partial_\theta F_\varepsilon\) at
%\((x_j,\theta_k)\). 
%One common choice is the upwind manner in Appendix~\ref{app:hamiltonian_flux}.
%For the positivity argument below, we further assume
%the quasi-positivity property
%\begin{equation}\label{ass:H_flux}
%W_{j,k}=0,
%\qquad
%W_{j,\ell}\ge0\ \text{for all }\ell
%\quad\Longrightarrow\quad
%-\mathcal D_\theta\widehat{\mathcal F}(W,u)_{j,k}\ge0 .
%\end{equation}
%This property is satisfied by standard monotone finite volume fluxes, such as

The second ingredient is a conservative discretization of the transport
coupling in the amplitude equation.  Recall that the  flux in \eqref{eq:WKB_reformulation} is
$
F_\varepsilon=-W_\varepsilon\partial_\theta u_\varepsilon .
$
Let $\widehat{\mathcal F}^{\varepsilon}$ be a numerical flux approximating
$F_\varepsilon$. 
We denote by
$$
\mathcal D_\theta \widehat{\mathcal F}^{\varepsilon}_{j,k}
%=
%\mathcal D_\theta \widehat{\mathcal F}(W^\varepsilon,u^\varepsilon)_{j,k}
$$
a conservative and consistent approximation of
$\partial_\theta F_\varepsilon$ at the grid point $(x_j,\theta_k)$.  A typical choice is the upwind flux described in
Appendix~\ref{app:hamiltonian_flux}.

For the positivity argument below, we further assume that this discretization satisfies
the quasi-positivity property. 
Specifically,  for any fixed pair of indices $(j,k)$, if $W_{j,k}=0$ and $W_{j,k'}\ge0$ for all $k'$, then we have 
\begin{equation}\label{ass:H_flux}
-\mathcal D_\theta\widehat{\mathcal F}^{\varepsilon}_{j,k}\ge0 .
\end{equation}
%if $W_{j,k}=0$ and $W_{j,\ell}\ge0$ for all trait indices $\ell$, 
%then we have
%$-\mathcal D_\theta\widehat{\mathcal F}_{j,k}^\varepsilon\ge0$.
It is easy to check that the upwind flux described in Appendix~\ref{app:hamiltonian_flux} satisfies the property.

\paragraph{The semi-discrete WKB system.}
With these two ingredients specified, we now define the discrete density and
the semi-discrete WKB system. For notational convenience, set
\begin{equation}\label{def:E}
E_k^\varepsilon(t)
=
\exp\!\left(\frac{u_k^\varepsilon(t)}{\varepsilon}\right).
\end{equation}
The reconstructed nodal density is defined by
\begin{equation}\label{def:n_reconstructed}
n_{j,k}^\varepsilon(t)
=
W_{j,k}^\varepsilon(t)E_k^\varepsilon(t).
\end{equation}

As we shall see later, the evolution of \(u_\varepsilon\) and
\(W_\varepsilon\) is coupled through the spatial marginal density $\rho_\varepsilon$.  In the
semi-discrete scheme, this marginal is computed by the composite quadrature in
the trait direction.  More precisely, we define
\begin{equation}\label{eq:E_rho}
\rho_{j}^\varepsilon(t)
:=
\Delta\theta
\sum_{k=1}^{N_\theta}
n_{j,k}^\varepsilon(t)
=
\Delta\theta
\sum_{k=1}^{N_\theta}
W_{j,k}^\varepsilon(t)E_k^\varepsilon(t),
\qquad
j=0, 1,\cdots, N_x, 
\end{equation}
which approximates $\rho_{\varepsilon}(t, x_j)$ in \eqref{def:rho_P}. 
Given
\[
\boldsymbol\rho_\varepsilon(t)
=
\bigl(\rho_{0}^\varepsilon(t), \rho_{1}^\varepsilon(t),\cdots,\rho_{N_x}^\varepsilon(t)\bigr),
\]
the discrete effective Hamiltonian
\(\mathcal H_k(\boldsymbol\rho_\varepsilon(t))\), which approximates the principal eigenvalue $\mathcal H(\theta_k, \rho_\varepsilon(t))$ in \eqref{eq:eigen_N}, is determined by the discrete
principal eigenvalue problem
\begin{equation}\label{eq:eigen_discrete}
\left\{
\begin{aligned}
&-D_k\delta_x^2\mathcal N_j^{(k)}
=
\mathcal N_j^{(k)}
\bigl(K_j-\rho_{j}^\varepsilon(t)\bigr)
+
\mathcal N_j^{(k)}
\mathcal H_k\bigl(\boldsymbol\rho_\varepsilon(t)\bigr),
\qquad
j=0, 1,\cdots,N_x, \\
& \mathcal N_{-1}^{(k)} = \mathcal N_{1}^{(k)}, \quad \mathcal N_{N_x+1}^{(k)} = \mathcal N_{N_x-1}^{(k)}, 
\end{aligned}
\right.
\end{equation}
where \(\mathcal N^{(k)}\) is the corresponding positive eigenvector. 
%Its normalization does not affect \(\mathcal H_k\). For definiteness, one may impose
%\[
%\Delta x\sum_{j} \mathcal N_j^{(k)}=1 .
%\]

With the above notation, the semi-discrete WKB scheme reads
\begin{equation}\label{eq:semi_discrete_WKB_system}
\left\{
\begin{aligned}
&\frac{\mathrm d}{\mathrm dt}u_k^\varepsilon
+
\widehat H
\left(
\delta_\theta^-u_k^\varepsilon,
\delta_\theta^+u_k^\varepsilon
\right)
-
\varepsilon\delta_\theta^2u_k^\varepsilon
=
-\mathcal H_k\bigl(\boldsymbol\rho_\varepsilon(t)\bigr),
\qquad
k=1,\cdots,N_\theta,
\\[1mm]
&\varepsilon
\frac{\mathrm d}{\mathrm dt}W_{j,k}^\varepsilon
-
D_k\delta_x^2W_{j,k}^\varepsilon
-
\varepsilon^2\delta_\theta^2W_{j,k}^\varepsilon
+
2\varepsilon
\mathcal D_\theta
\widehat{\mathcal F}_{j,k}
\\
&\qquad =
W_{j,k}^\varepsilon
\bigl(
K_j-\rho_{j}^\varepsilon(t)
+\mathcal H_k(\boldsymbol\rho_\varepsilon(t))
-2\varepsilon\delta_\theta^2u_k^\varepsilon
\bigr),
\qquad
j=0, 1,\cdots,N_x,\quad k=1,\cdots,  N_\theta.
\end{aligned}
\right.
\end{equation}

\paragraph{The semi-discrete equation of $n$ and the remainder $R_{j,k}^\varepsilon$.} We next show the equation satisfied by the reconstructed density \(n_{j,k}^\varepsilon\). Since \(E_k^\varepsilon\) is independent of the
spatial index \(j\),
\[
\delta_x^2 n_{j,k}^\varepsilon
=
E_k^\varepsilon\delta_x^2W_{j,k}^\varepsilon .
\]
Moreover,
\begin{equation}\label{def:dt_n}
\varepsilon\frac{\mathrm d}{\mathrm dt}n_{j,k}^\varepsilon
=
E_k^\varepsilon
\left(
\varepsilon\frac{\mathrm d}{\mathrm dt}W_{j,k}^\varepsilon
+
W_{j,k}^\varepsilon
\frac{\mathrm d}{\mathrm dt}u_k^\varepsilon
\right).
\end{equation}
Multiplying the first equation in \eqref{eq:semi_discrete_WKB_system} by $E_k^\varepsilon W_{j,k}^\varepsilon$ and multiplying the second equation in  \eqref{eq:semi_discrete_WKB_system} by $ E_k^\varepsilon$, 
using \eqref{def:dt_n},
we can derive that \(n_{j,k}^\varepsilon\) satisfies
\begin{equation}\label{eq:semi_discrete_n}
\varepsilon\frac{\mathrm d}{\mathrm dt} n_{j,k}^\varepsilon
-
D_k\delta_x^2 n_{j,k}^\varepsilon
=
\bigl(K_j-\rho_{j}^\varepsilon\bigr)n_{j,k}^\varepsilon
+
R_{j,k}^\varepsilon ,
\end{equation}
where the remainder has the form 
\begin{align} 
R_{j,k}^\varepsilon
&=
\varepsilon^2
E_k^\varepsilon
\delta_\theta^2 W_{j,k}^\varepsilon
-
\varepsilon
W_{j,k}^\varepsilon
E_k^\varepsilon
\delta_\theta^2 u_k^\varepsilon
-
W_{j,k}^\varepsilon
E_k^\varepsilon
\widehat H
\left(
\delta_\theta^-u_k^\varepsilon,
\delta_\theta^+u_k^\varepsilon
\right)
-
2\varepsilon
E_k^\varepsilon
\mathcal D_\theta
\widehat{\mathcal F}^\varepsilon_{j,k}.
\label{eq:R_exact}
\end{align}
The equation \eqref{eq:semi_discrete_n} is a semi-discrete analogue of the
original density equation \eqref{eq:n_intro}, with the remainder \(R_{j,k}^\varepsilon\) \eqref{eq:R_exact} which approximates
\(\varepsilon^2\partial_{\theta\theta}n^\varepsilon\).

%===================================================================================================
%   Subsection: basic structural property
%===================================================================================================

\subsection{Basic structural property}

The semi-discrete scheme \eqref{eq:semi_discrete_WKB_system} can be shown to be positive-preserving 
as long as the initial data is nonnegative. 
The detailed result is summarized as the following lemma. 

\begin{lemma}[Positivity of the amplitude]
\label{lem:positivity_W}
Assume that \(W_{j,k}^\varepsilon(0)\ge0\) for all \(j,k\). Then the
semi-discrete amplitude equation in \eqref{eq:semi_discrete_WKB_system}
preserves nonnegativity, namely
\[
    W_{j,k}^\varepsilon(t)\ge0
    \qquad
    \forall\,j,k,\ t\ge0,
\]
as long as the solution exists. Consequently,
\[
    n_{j,k}^\varepsilon(t)\ge0
    \qquad
    \forall\,j,k,\ t\ge0 .
\]
\end{lemma}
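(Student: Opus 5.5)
The plan is to treat the amplitude equation in \eqref{eq:semi_discrete_WKB_system} as a finite-dimensional ODE system $\frac{\mathrm d}{\mathrm dt}W=\Phi(W,u)$ and verify the quasi-positivity (Kamke/Müller) condition: whenever $W_{j,k}=0$ and all other components are nonnegative, the right-hand side of the $(j,k)$ equation is nonnegative. On any interval $[0,T]$ where the solution exists, $u$ and $W$ are continuous, hence bounded. This makes $E_k^\varepsilon$, $\rho_j^\varepsilon$, $\mathcal H_k(\boldsymbol\rho_\varepsilon)$ and $\delta_\theta^2u_k^\varepsilon$ bounded continuous functions of time. The principal eigenvalue depends continuously on $\boldsymbol\rho_\varepsilon$, being the top eigenvalue of a symmetrizable tridiagonal matrix.

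First I write the $(j,k)$ equation as
\[
\varepsilon\frac{\mathrm d}{\mathrm dt}W_{j,k}=D_k\delta_x^2W_{j,k}+\varepsilon^2\delta_\theta^2W_{j,k}-2\varepsilon\mathcal D_\theta\widehat{\mathcal F}_{j,k}+c_{j,k}(t)W_{j,k},
\]
with $c_{j,k}=K_j-\rho_j^\varepsilon+\mathcal H_k-2\varepsilon\delta_\theta^2u_k^\varepsilon$ bounded on $[0,T]$. At a point with $W_{j,k}=0$ and all other $W\ge0$, the four terms behave as follows:
\begin{itemize}
\item $D_k\delta_x^2W_{j,k}=D_k(W_{j+1,k}+W_{j-1,k})/\Delta x^2\ge0$, and the ghost values under the Neumann reflection are again nonnegative interior values;
\item $\varepsilon^2\delta_\theta^2W_{j,k}\ge0$ by the same argument with periodic indices;
\item $-2\varepsilon\mathcal D_\theta\widehat{\mathcal F}_{j,k}\ge0$ by the assumed quasi-positivity \eqref{ass:H_flux};
\item the reaction term vanishes.
\end{itemize}

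To turn this into a rigorous argument while avoiding the tangency issue, I use a standard perturbation. Let $\lambda=\sup_{[0,T]}\max|c_{j,k}|/\varepsilon+1$ and set $V=e^{\lambda t}W$, or alternatively consider $W^\eta$ solving the system with initial data $W(0)+\eta$ and an added source $\eta$. Let $t^*$ be the first time some component $V_{j,k}$ reaches a negative value. A cleaner route is the following. Suppose $W$ first becomes negative after $t_0=\sup\{t: W\ge0 \text{ on }[0,t]\}$. Then the off-diagonal linear terms in $W$ satisfy the cooperative (Metzler) structure, and the flux part is handled by \eqref{ass:H_flux}. If the flux is linear in $W$ with coefficients depending on $u$, as in the upwind choice, then \eqref{ass:H_flux} means exactly that its off-diagonal coefficients are nonnegative. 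The whole system is then a linear ODE $\frac{\mathrm d}{\mathrm dt}W=A(t)W$ with $A(t)$ Metzler (only the nonlinear coupling through $\rho$ and $u$ sits in the coefficients). Nonnegativity then follows from the representation $W(t)=e^{-\mu t}\,\Psi(t)W(0)$, where after adding $\mu I$ the matrix $A+\mu I$ is entrywise nonnegative and the Peano--Baker series of a nonnegative matrix function is nonnegative.

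Finally, $n_{j,k}^\varepsilon=W_{j,k}^\varepsilon E_k^\varepsilon\ge0$ because $E_k^\varepsilon>0$. The main obstacle is the gap between the pointwise quasi-positivity \eqref{ass:H_flux}, as stated, and a genuine invariance argument when the flux may be nonlinear in $W$. I would first handle the linear-in-$W$ case via the Metzler argument. For the general case I would use the perturbed problem with source $\eta>0$, where strict positivity of the derivative at the first touching time gives a contradiction, and then let $\eta\to0$ using continuous dependence on data.
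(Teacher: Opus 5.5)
Your proposal is correct and follows the paper's proof: you check that the right-hand side of the $(j,k)$ amplitude equation is nonnegative whenever $W_{j,k}=0$ and $W\ge0$, using the Neumann ghost values, periodicity, the quasi-positivity assumption \eqref{ass:H_flux} and the vanishing reaction term, and then conclude by forward invariance of the nonnegative cone. The only difference is that the paper just cites the standard invariance criterion, whereas you spell out a proof of it: either freeze $\rho$, $u$ and $\mathcal H_k$ along the trajectory to get a linear Metzler system with a nonnegative fundamental matrix (upwind flux), or use the $\eta$-perturbation with a first-touching-time argument, which requires the full vector field to be locally Lipschitz, as the paper's citation implicitly does too.
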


\begin{proof}
%It is enough to verify the quasi-positivity of the right-hand side of the
%finite-dimensional ODE system for \(W^\varepsilon\). 
From the amplitude
equation in \eqref{eq:semi_discrete_WKB_system}, we can write
\[
\varepsilon \frac{\mathrm d}{\mathrm dt}W_{j,k}^\varepsilon
=
D_k\delta_x^2W_{j,k}^\varepsilon
+
\varepsilon^2\delta_\theta^2W_{j,k}^\varepsilon
-
2\varepsilon
\mathcal D_\theta\widehat{\mathcal F}^\varepsilon_{j,k}
+
W_{j,k}^\varepsilon B_{j,k}(t),
\]
where
\[
B_{j,k}(t)
=
K_j-\rho_{j}^\varepsilon(t)
+\mathcal H_k(\boldsymbol\rho_\varepsilon(t))
-2\varepsilon\delta_\theta^2u_k^\varepsilon(t).
\]
Let \(W^\varepsilon\ge0\) and suppose that
\(W_{j,k}^\varepsilon=0\) for some \((j,k)\) at time $t$. Then, for an interior point,
\[
\delta_x^2W_{j,k}^\varepsilon
=
\frac{W_{j+1,k}^\varepsilon+W_{j-1,k}^\varepsilon}{(\Delta x)^2}
\ge0, 
\quad 
\delta_\theta^2W_{j,k}^\varepsilon
=
\frac{W_{j,k+1}^\varepsilon+W_{j,k-1}^\varepsilon}{(\Delta\theta)^2}
\ge0 .
\]
The same conclusion holds at the boundary because of the Neumann ghost
values in $x$-direction and the periodicity in \(\theta\)-direction. 
Combining all the inequalities, using the quasi-positivity assumption \eqref{ass:H_flux} and the fact that 
\(
    W_{j,k}^\varepsilon B_{j,k}(t)=0 
\)
since we assumed $W_{j,k}^\varepsilon=0$, 
we finally have that, at this specific $(j,k)$, the following inequality holds, 
\[
\left.
\frac{\mathrm d}{\mathrm dt}W_{j,k}^\varepsilon
\right|_{W_{j,k}^\varepsilon=0}
\ge0 .
\]
Thus the vector field points inward on the boundary of the nonnegative cone.
The standard invariance criterion for finite-dimensional ODE systems implies
the claim.
\end{proof}

%\begin{remark}[On the time-dependent stability]
%For each fixed \(\varepsilon>0\), the semi-discrete system is a
%finite-dimensional ODE system under the locally Lipschitz assumptions on the
%discrete operators. Uniform-in-\(\varepsilon\) continuation estimates for the
%time-dependent problem require a separate bootstrap argument involving the
%density, phase, amplitude, and trait-direction remainder. This issue is not
%pursued in the present work.
%\end{remark}

%===================================================================================================
%   Subsection: discrete stationary asymptotic preserving structure
%===================================================================================================

\subsection{Conditional stationary asymptotic-preserving structure}
\label{subsec:stationary_discrete_asymptotic preserving}

We now turn to stationary semi-discrete states. The goal is to identify which
parts of the continuous rare-mutation structure are retained on a fixed grid.
%For the generic split WKB scheme, the only nonstandard point is the
%trait-direction remainder in the reconstructed density equation. We therefore
%state the stationary result under an explicit one-sided stability condition.
%This keeps the asymptotic preserving statement conditional and avoids claiming that the WKB
%remainder is automatically controlled for every trait discretization.
By Lemma~\ref{lem:positivity_W}, stationary states obtained from nonnegative
amplitudes satisfy
\[
    W_{j,k}^\varepsilon\ge0,
    \qquad
    n_{j,k}^\varepsilon
    =W_{j,k}^\varepsilon E_k^\varepsilon\ge0 .
\]
All estimates below are for such nonnegative reconstructed densities.

\paragraph{Weighted density balance.}
Define
\begin{equation}\label{def:eta}
    \eta_j^\varepsilon
    =
    \Delta\theta
    \sum_{k=1}^{N_\theta}
    \frac{n_{j,k}^\varepsilon}{D_k}.
\end{equation}
Recalling \eqref{eq:E_rho}, multiplying \eqref{eq:semi_discrete_n} by \(1/D_k\) and summing over the
trait variable gives the stationary weighted balance equation
\begin{equation}\label{eq:weighted_summed_density_stationary}
    -\delta_x^2\rho_j^\varepsilon
    =
    \eta_j^\varepsilon
    \bigl(K_j-\rho_j^\varepsilon\bigr)
    +
    \mathcal R_j^{\varepsilon,D},
    \qquad j=0, 1,\cdots,N_x,
\end{equation}
where
\begin{equation}\label{def:R_D}
    \mathcal R_j^{\varepsilon,D}
    =
    \Delta\theta
    \sum_{k=1}^{N_\theta}
    \frac{R_{j,k}^\varepsilon}{D_k}.
\end{equation}
The term \(\mathcal R_j^{\varepsilon,D}\) is the discrete analogue of the
continuous weighted mutation contribution
\begin{equation}\label{eq:cts_R_expression}
    \varepsilon^2
    \int_{\mathbb T}
    \frac1{D(\theta)}
    \partial_{\theta\theta}n^\varepsilon
    \,d\theta
    =
    \varepsilon^2
    \int_{\mathbb T}
    n^\varepsilon
    \partial_{\theta\theta}\left(\frac1{D(\theta)}\right)
    \,d\theta ,
\end{equation}
where periodicity in the trait variable has been used.  At the continuous
level this term is bounded above by \(C\varepsilon^2\rho^\varepsilon\).  For
the split WKB discretization, however, the discrete operators act on the
reconstructed density
$
    n_{j,k}^\varepsilon
$
and an exact discrete chain rule is not available.  
Therefore, we present the following one-sided control assumption instead.

\begin{assumption}[One-sided weighted remainder stability]
\label{ass:remainder_stability}
For fixed spatial and trait grids, there exist constants \(C_{R,h}>0\) and \(r_h\ge0\),
possibly depending on the fixed mesh but independent of \(\varepsilon\), such that every
stationary state under consideration satisfies
\begin{equation}\label{eq:remainder_CR_bound}
    \bigl(\mathcal R_j^{\varepsilon,D}\bigr)_+
    \le
    C_{R,h}\rho_j^\varepsilon+r_h,
    \quad j=0,1,\cdots,N_x.
\end{equation}
\end{assumption}

This condition controls only the positive part of the weighted defect.  A
negative contribution is favorable in the maximum-principle estimate. 
 The assumption should be understood as a structural stability condition on the trait
discretization. 
For a density-level conservative discretization, this type of estimate follows
from periodic discrete summation by parts in the trait variable. Appendix B
records this special case.
For the WKB scheme used in the paper,  the behavior of this estimate is examined numerically in
Section~\ref{sec:numerics}.  
As we will see later, in the numerical tests, the weighted defect is
typically nonpositive or has a small positive part, which supports the practical
validity of the assumption.

\begin{proposition}[Conditional density bound for stationary semi-discrete states]
\label{prop:uniform_bound_stationary_mass}
Let \((u^\varepsilon,W^\varepsilon)\) be a stationary semi-discrete state with
nonnegative reconstructed density.  Assume that
Assumption~\ref{ass:remainder_stability} holds.  Then there exists a constant
\(C_\rho>0\), independent of \(\varepsilon\), such that
\[
    0\le \rho_j^\varepsilon\le C_\rho,
    \qquad j=0, 1,\cdots, N_x .
\]
\end{proposition}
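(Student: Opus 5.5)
The plan is to apply a discrete maximum principle to the stationary weighted balance equation \eqref{eq:weighted_summed_density_stationary}. The weight $1/D_k$ was chosen precisely so that the diffusion term becomes $-\delta_x^2\rho_j^\varepsilon$, independent of $k$. The lower bound $\rho_j^\varepsilon\ge0$ is immediate: by Lemma~\ref{lem:positivity_W} and the standing assumption, $n_{j,k}^\varepsilon\ge0$, so $\rho_j^\varepsilon$ in \eqref{eq:E_rho} is a sum of nonnegative terms. For the same reason,
\[
\frac{\rho_j^\varepsilon}{D_{\max}}\le\eta_j^\varepsilon\le\frac{\rho_j^\varepsilon}{D_{\min}},
\]
which compares $\eta_j^\varepsilon$ from \eqref{def:eta} with $\rho_j^\varepsilon$ using only \eqref{eq:basic_assumptions}.

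\textbf{Step 1: sign of the discrete Laplacian at the maximum.} Let $j_0$ be an index with $\rho_{j_0}^\varepsilon=\max_j\rho_j^\varepsilon$. At an interior index, $\delta_x^2\rho_{j_0}^\varepsilon\le0$ because both neighbours are no larger. At $j_0=0$ or $j_0=N_x$, the Neumann ghost values give, for instance,
\[
\delta_x^2\rho_0^\varepsilon=\frac{2(\rho_1^\varepsilon-\rho_0^\varepsilon)}{(\Delta x)^2}\le0 .
\]
The ghost values for $\rho^\varepsilon$ are inherited from those of $W^\varepsilon$, since $\rho^\varepsilon$ is a linear combination of $W^\varepsilon_{\cdot,k}$ with $j$-independent weights $E_k^\varepsilon$. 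Hence the left-hand side of \eqref{eq:weighted_summed_density_stationary} is nonnegative at $j_0$.

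\textbf{Step 2: insert Assumption~\ref{ass:remainder_stability}.} At $j_0$ this yields
\[
0\le \eta_{j_0}^\varepsilon\bigl(K_{j_0}-\rho_{j_0}^\varepsilon\bigr)+C_{R,h}\rho_{j_0}^\varepsilon+r_h .
\]
If $\rho_{j_0}^\varepsilon\le K_{\max}$ we are done. Otherwise $K_{j_0}-\rho_{j_0}^\varepsilon<0$, and the lower bound $\eta_{j_0}^\varepsilon\ge\rho_{j_0}^\varepsilon/D_{\max}$ gives
\[
0\le \frac{\rho_{j_0}^\varepsilon}{D_{\max}}\bigl(K_{\max}-\rho_{j_0}^\varepsilon\bigr)+C_{R,h}\rho_{j_0}^\varepsilon+r_h .
\]
Multiplying by $D_{\max}$ turns this into the quadratic inequality $\rho^2-(K_{\max}+D_{\max}C_{R,h})\rho-D_{\max}r_h\le0$ for $\rho=\rho_{j_0}^\varepsilon$.

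\textbf{Step 3: conclude.} The quadratic inequality bounds $\rho_{j_0}^\varepsilon$ by its positive root. Combining with the first case, we may take
\[
C_\rho=\max\Bigl\{K_{\max},\ \tfrac12\Bigl(A+\sqrt{A^2+4D_{\max}r_h}\Bigr)\Bigr\},\qquad A=K_{\max}+D_{\max}C_{R,h}.
\]
This constant depends only on $K_{\max}$, $D_{\max}$ and the mesh constants $C_{R,h}$, $r_h$, hence is independent of $\varepsilon$. Since $j_0$ realizes the maximum, the bound holds for every $j$.

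\textbf{Main obstacle.} The only delicate point is making sure the one-sided assumption enters with the correct sign. One must use $\mathcal R_{j_0}^{\varepsilon,D}\le(\mathcal R_{j_0}^{\varepsilon,D})_+$, and the reaction term must be strictly dissipative once $\rho_{j_0}^\varepsilon>K_{\max}$. This is why the case split on $\rho_{j_0}^\varepsilon$ versus $K_{\max}$ is needed before replacing $\eta$ by $\rho/D_{\max}$. The check of the boundary case under the ghost-value convention is routine.
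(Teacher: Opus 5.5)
Your proof is correct and follows essentially the same route as the paper: a discrete maximum principle applied to the weighted balance \eqref{eq:weighted_summed_density_stationary} at the maximizer of $\rho^\varepsilon$, combined with the one-sided bound of Assumption~\ref{ass:remainder_stability} and the comparison between $\eta_j^\varepsilon$ and $\rho_j^\varepsilon$, leading to a quadratic inequality. The differences are minor. Your case split on $\rho_{j_0}^\varepsilon$ versus $K_{\max}$ avoids the bound $\eta\le\rho/D_{\min}$ that the paper uses on the term $K_{\max}\eta_{j_*}^\varepsilon$, which gives a slightly sharper constant. You also check the Neumann boundary index explicitly, which the paper leaves implicit.
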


\begin{proof}
By using  \(K_j\le K_{\max}\) and Assumption~\ref{ass:remainder_stability}, it is easy to get from \eqref{eq:weighted_summed_density_stationary} that 
\[
    -\delta_x^2\rho_j^\varepsilon
    +
    \eta_j^\varepsilon\rho_j^\varepsilon
    =
    K_j  \eta_j^\varepsilon
    +
    \mathcal R_j^{\varepsilon,D}
    \le
    K_{\max}\eta_j^\varepsilon
    +
    C_{R,h}\rho_j^\varepsilon
    +
    r_h .
\] 
Let \(j_*\) be a point where
\[
    \rho_{j_*}^\varepsilon = \max_j \rho_j^\varepsilon.
\]
Since \(\delta_x^2\rho_{j_*}^\varepsilon\le0\), we have
\begin{equation}\label{proof:rho_inequality}
    \eta_{j_*}^\varepsilon \rho_{j_*}^\varepsilon
    \le
    K_{\max}\eta_{j_*}^\varepsilon
    +
    C_{R,h}\rho_{j_*}^\varepsilon
    +
    r_h .
\end{equation}
On the other hand, recalling \eqref{eq:basic_assumptions} and \eqref{def:eta}, it is easy to see that, for all $j$, 
\begin{equation}\label{proof:rho_eta}
    \frac1{D_{\max}}\rho_j^\varepsilon
    \le
    \eta_j^\varepsilon
    \le
    \frac1{D_{\min}}\rho_j^\varepsilon .
\end{equation}
Combining \eqref{proof:rho_inequality} and \eqref{proof:rho_eta} gives
\[
    \frac{(\rho_{j_*}^\varepsilon)^2}{D_{\max}}
    \le
    \left(
        \frac{K_{\max}}{D_{\min}}+C_{R,h}
    \right)\rho_{j_*}^\varepsilon
    +
    r_h .
\]
The right-hand side is at most linear in \(\rho_{j_*}^\varepsilon\), while the left-hand
side is quadratic.  Hence \(\rho_{j_*}^\varepsilon\) is bounded by a constant independent
of \(\varepsilon\).  This proves the claim.
\end{proof}

\paragraph{Boundedness of the effective Hamiltonian.}
The density bound controls the coefficients in the discrete eigenvalue problem
and hence the stationary phase equation.

%\begin{proposition}[Uniform bound for the discrete effective Hamiltonian]
%\label{prop:stationary_H_bound}
%Assume that
%\[
%    0\le \rho_j^\varepsilon\le C_\rho,
%    \qquad j=0, 1,\cdots, N_x,
%\]
%where \(C_\rho\) is independent of \(\varepsilon\).  Then the discrete
%effective Hamiltonian defined by \eqref{eq:eigen_discrete} satisfies
%\[
%    |\mathcal H_k(\boldsymbol\rho^\varepsilon)|
%    \le C_{\mathcal H},
%    \qquad k=1,\cdots, N_\theta,
%\]
%where \(C_{\mathcal H}\) is independent of \(\varepsilon\).
%\end{proposition}
%
%\begin{proof}
%The discrete eigenvalue problem is associated with the symmetric operator
%\[
%    -D_k\delta_x^2-(K_j-\rho_j^\varepsilon).
%\]
%Its principal eigenvalue has the Rayleigh representation
%\[
%    \mathcal H_k(\boldsymbol\rho^\varepsilon)
%    =
%    \min_{\varphi\ne0}
%    \frac{
%        D_k\Delta x\sum_{j=0}^{N_x-1} |\delta_x^+\varphi_j|^2
%        -
%        \Delta x\sum_{j=0}^{N_x}  \omega_j
%        (K_j-\rho_j^\varepsilon)\varphi_j^2
%    }{
%        \Delta x\sum_{j=0}^{N_x} \omega_j \varphi_j^2
%    } ,
%\]
%where $\omega_j =  \frac{1}{2}$ for $j=0, N_x$ and $\omega_j = 1$ for $1\le j \le N_x-1$. 
%On the one hand, since \(0\le K_j\le K_{\max}\) and
%\(0\le\rho_j^\varepsilon\le C_\rho\), we have
%\[
%    \mathcal H_k(\boldsymbol\rho^\varepsilon)\ge -K_{\max}.
%\]
%On the other hand, testing the quotient with a constant vector gives
%\[
%    \mathcal H_k(\boldsymbol\rho^\varepsilon)\le C_\rho .
%\]
%Thus
%\[
%    |\mathcal H_k(\boldsymbol\rho^\varepsilon)|
%    \le
%    \max\{K_{\max},C_\rho\}
%    =:C_{\mathcal H},
%\]
%which proves the claim.
%\end{proof}

\begin{proposition}[Uniform bound for the discrete effective Hamiltonian]
\label{prop:stationary_H_bound}
Assume that
\[
    0\le \rho_j^\varepsilon\le C_\rho,
    \qquad j=0,1,\cdots,N_x,
\]
where \(C_\rho\) is independent of \(\varepsilon\). Then the discrete
effective Hamiltonian defined by \eqref{eq:eigen_discrete} satisfies
\[
    |\mathcal H_k(\boldsymbol\rho^\varepsilon)|
    \le C_{\mathcal H},
    \qquad k=1,\cdots,N_\theta,
\]
where \(C_{\mathcal H}\) is independent of \(\varepsilon\).
\end{proposition}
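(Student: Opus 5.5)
We bound the principal eigenvalue of the discrete problem \eqref{eq:eigen_discrete} from both sides using a variational (Rayleigh quotient) characterization. The only $\varepsilon$-dependence sits in the potential $K_j-\rho_j^\varepsilon$, and the constants we obtain will depend only on $K_{\max}$ and $C_\rho$.

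\textbf{Step 1: symmetric form.} For fixed $k$, the matrix $A^{(k)}=-D_k\delta_x^2-\mathrm{diag}(K_j-\rho_j^\varepsilon)$ with Neumann ghost values is not symmetric in the standard inner product. It becomes symmetric in the weighted inner product $\langle\varphi,\psi\rangle_\omega=\Delta x\sum_{j=0}^{N_x}\omega_j\varphi_j\psi_j$, where $\omega_0=\omega_{N_x}=\tfrac12$ and $\omega_j=1$ otherwise. A discrete summation by parts gives
\[
\langle -\delta_x^2\varphi,\varphi\rangle_\omega=\Delta x\sum_{j=0}^{N_x-1}|\delta_\theta^+\varphi|^2\big|_{\theta\to x}\ge0,
\]
that is, $\Delta x\sum_{j=0}^{N_x-1}\bigl((\varphi_{j+1}-\varphi_j)/\Delta x\bigr)^2\ge 0$. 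The off-diagonal entries of $-A^{(k)}$ are nonnegative and the matrix is irreducible, since the grid is connected. By Perron--Frobenius, the smallest eigenvalue of $A^{(k)}$ is simple and has a positive eigenvector. Because $\mathcal H_k$ is defined as the eigenvalue with positive eigenvector, $\mathcal H_k$ equals this smallest eigenvalue (up to the sign convention of \eqref{eq:eigen_discrete}). This identification is the one point that needs care: we must check that the eigenvalue with positive eigenvector is the minimal one in the Rayleigh sense. It follows because eigenvectors of a symmetric matrix for distinct eigenvalues are $\omega$-orthogonal, so no other eigenvector can be positive. We therefore obtain
\[
-\mathcal H_k=\min_{\varphi\ne0}\frac{D_k\Delta x\sum_{j=0}^{N_x-1}|(\varphi_{j+1}-\varphi_j)/\Delta x|^2-\langle (K-\rho^\varepsilon)\varphi,\varphi\rangle_\omega}{\langle\varphi,\varphi\rangle_\omega}.
\]
The sign follows from rewriting \eqref{eq:eigen_discrete} as $A^{(k)}\mathcal N=\mathcal H_k\mathcal N$. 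We will track the sign carefully; the bound is symmetric anyway.

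\textbf{Step 2: two-sided bounds.} The gradient term is nonnegative and $K_j-\rho_j^\varepsilon\le K_{\max}$, so the quotient is bounded below by $-K_{\max}$. Testing with the constant vector $\varphi\equiv1$ kills the gradient term. It gives the value $-\langle K-\rho^\varepsilon,1\rangle_\omega/\langle1,1\rangle_\omega$, which is at most $C_\rho$ because $K_j\ge0$ and $\rho_j^\varepsilon\le C_\rho$. Hence the eigenvalue lies in $[-K_{\max},C_\rho]$, and
\[
|\mathcal H_k|\le \max\{K_{\max},C_\rho\}=:C_{\mathcal H},
\]
independent of $\varepsilon$ and of $k$.

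\textbf{Alternative for Step 1.} If one wants to avoid the symmetrization, an equally short argument evaluates \eqref{eq:eigen_discrete} at the indices where the positive eigenvector $\mathcal N^{(k)}$ attains its maximum and its minimum. At the maximum, $-\delta_x^2\mathcal N\ge0$; at the minimum, $-\delta_x^2\mathcal N\le0$. This holds at boundary indices as well, thanks to the ghost values. Dividing by $\mathcal N_j>0$ yields $-(K_j-\rho_j)\le\mathcal H_k$ at one index and $\mathcal H_k\le -(K_{j'}-\rho_{j'})$ at another. These give exactly the same bounds $-K_{\max}\le\mathcal H_k\le C_\rho$. I would present this maximum-principle version as the main proof, since it uses only positivity of $\mathcal N^{(k)}$, and mention the Rayleigh quotient as a remark.
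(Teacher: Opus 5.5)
Your Steps~1--2 coincide with the paper's proof: trapezoidal symmetrization, the Rayleigh quotient, the lower bound from the nonnegative gradient term, and the upper bound from $\varphi\equiv1$. There is one sign slip in your display. Since \eqref{eq:eigen_discrete} is exactly $A^{(k)}\mathcal N^{(k)}=\mathcal H_k\mathcal N^{(k)}$ and $\mathcal H_k$ is the bottom of the spectrum of $A^{(k)}$, the formula should read $\mathcal H_k=\min_{\varphi\ne0}(\cdots)$, not $-\mathcal H_k=\min_{\varphi\ne0}(\cdots)$. This does not affect $|\mathcal H_k|\le C_{\mathcal H}$, and $[-K_{\max},C_\rho]$ is indeed the correct interval for $\mathcal H_k$ itself. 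Your $\omega$-orthogonality remark also supplies something the paper uses without comment: that the eigenvalue with a positive eigenvector is the Rayleigh minimum.

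The maximum-principle argument you would present as the main proof is a genuinely different, more elementary route, and it is correct. At a maximizer $j$ of $\mathcal N^{(k)}$ (boundary indices included, thanks to the ghost values) one has $-D_k\delta_x^2\mathcal N^{(k)}_j\ge0$, hence $\mathcal H_k\ge\rho_j^\varepsilon-K_j\ge-K_{\max}$. At a minimizer $j'$ one gets $\mathcal H_k\le\rho_{j'}^\varepsilon-K_{j'}\le C_\rho$.

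This route uses only the positivity of $\mathcal N^{(k)}$, which is built into the definition \eqref{eq:eigen_discrete}. It needs neither self-adjointness nor any spectral identification, so it would carry over to nonsymmetric spatial discretizations such as nonuniform grids, drift terms, or other boundary closures.

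The paper's variational route buys two things in return. Its upper bound is the $\omega$-weighted average of $\rho^\varepsilon-K$, which is sharper than the bound $\max_j(\rho_j^\varepsilon-K_j)$ that the maximum principle yields once the unknown index $j'$ is eliminated. Also, because the gradient term enters with coefficient $D_k\ge0$, it shows at once that $\mathcal H_k(\boldsymbol\rho^\varepsilon)$ is nondecreasing in $D_k$. That is the discrete counterpart of $\mathcal H$ inheriting the monotonicity of $D$, which is relevant to the unique-zero hypothesis used later for trait selection.
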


\begin{proof}
We use the trapezoidal weighted inner product
\[
    \langle \varphi,\psi\rangle_h
    =
    \Delta x\sum_{j=0}^{N_x}\omega_j \varphi_j\psi_j,
    \qquad
    \omega_0=\omega_{N_x}=\frac12,\qquad
    \omega_j=1,\quad 1\le j\le N_x-1 .
\]
With the Neumann ghost values
\[
    \varphi_{-1}=\varphi_1,\qquad
    \varphi_{N_x+1}=\varphi_{N_x-1},
\]
the operator \(-\delta_x^2\) is self-adjoint and nonnegative with respect to
this weighted inner product. More precisely,
\[
    \langle -\delta_x^2\varphi,\varphi\rangle_h
    =
    \Delta x\sum_{j=0}^{N_x-1}
    |\delta_x^+\varphi_j|^2,
    \qquad
    \delta_x^+\varphi_j
    =
    \frac{\varphi_{j+1}-\varphi_j}{\Delta x}.
\]
Hence the discrete eigenvalue problem is associated with the self-adjoint
operator
\[
    -D_k\delta_x^2-\operatorname{diag}(K_j-\rho_j^\varepsilon)
\]
in this weighted inner product. Its principal eigenvalue has the Rayleigh
representation
\[
    \mathcal H_k(\boldsymbol\rho^\varepsilon)
    =
    \min_{\varphi\ne0}
    \frac{
        D_k\Delta x\sum_{j=0}^{N_x-1} |\delta_x^+\varphi_j|^2
        -
        \Delta x\sum_{j=0}^{N_x} \omega_j
        (K_j-\rho_j^\varepsilon)\varphi_j^2
    }{
        \Delta x\sum_{j=0}^{N_x} \omega_j \varphi_j^2
    } .
\]
On the one hand, since \(0\le K_j\le K_{\max}\) and
\(\rho_j^\varepsilon\ge0\), the nonnegative gradient term gives
\[
    \mathcal H_k(\boldsymbol\rho^\varepsilon)
    \ge -K_{\max}.
\]
On the other hand, testing the quotient with the constant vector
\(\varphi_j\equiv1\) gives
\[
    \mathcal H_k(\boldsymbol\rho^\varepsilon)
    \le
    \frac{
        -\Delta x\sum_{j=0}^{N_x}\omega_j
        (K_j-\rho_j^\varepsilon)
    }{
        \Delta x\sum_{j=0}^{N_x}\omega_j
    }
    \le C_\rho ,
\]
because \(K_j\ge0\) and \(\rho_j^\varepsilon\le C_\rho\). Therefore
\[
    |\mathcal H_k(\boldsymbol\rho^\varepsilon)|
    \le
    \max\{K_{\max},C_\rho\}
    =:C_{\mathcal H},
\]
which proves the claim.
\end{proof}

For the stationary phase equation, the Godunov numerical Hamiltonian further gives a
fixed-grid bound for the phase.  To be more specific, suppose that
\begin{equation}\label{def:Godunov_eq}
    \widehat H_G
    \left(
        \delta_\theta^-u_k^\varepsilon,
        \delta_\theta^+u_k^\varepsilon
    \right)
    -
    \varepsilon\delta_\theta^2u_k^\varepsilon
    =
    -\mathcal H_k(\boldsymbol\rho^\varepsilon),
\end{equation}
where $\widehat H_G$ is defined in \eqref{eq:app_Godunov_H}.  
%\begin{equation}\label{def:Godunov}
%    \widehat H_G(p^-,p^+)
%    =
%    -\bigl((p^-)_{-}\bigr)^2
%    -
%    \bigl((p^+)_{+}\bigr)^2 .
%\end{equation}
%Set \(q_k^\varepsilon=D_\theta^+u_k^\varepsilon\).  Then
%\(D_\theta^-u_k^\varepsilon=q_{k-1}^\varepsilon\), and periodicity gives
%\[
%    -\widehat H_G
%    \left(
%        D_\theta^-u_k^\varepsilon,
%        D_\theta^+u_k^\varepsilon
%    \right)
%    =
%    \bigl((q_{k-1}^\varepsilon)_{-}\bigr)^2
%    +
%    \bigl((q_k^\varepsilon)_{+}\bigr)^2 .
%\]
Summing the stationary phase equation over the periodic trait grid eliminates
\(\delta_\theta^2u^\varepsilon\).  Therefore
\[
    \Delta\theta
    \sum_{k=1}^{N_\theta} |\delta_\theta^+u_k^\varepsilon|^2
    =
    -\Delta\theta
    \sum_{k=1}^{N_\theta}
    \widehat H_G
    \left(
        \delta_\theta^-u_k^\varepsilon,
        \delta_\theta^+u_k^\varepsilon
    \right)
    =
    \Delta\theta
    \sum_{k=1}^{N_\theta}
    \mathcal H_k(\boldsymbol\rho^\varepsilon)
    \le C_{\mathcal H}.
\]
Thus
\begin{equation}\label{proof:bound_Du}
    \max_k |\delta_\theta^+u_k^\varepsilon|
    \le
    \frac{\sqrt{C_{\mathcal H}}}{\sqrt{\Delta\theta}} .
\end{equation}

\paragraph{Formal asymptotic-preserving trait-selection limit.}

We now pass formally to the fixed-grid rare-mutation limit.  The density bound
gives compactness of the discrete spatial marginal, and the Godunov phase
estimate gives compactness of the normalized phase.  Passing to the limit in
the stationary phase equation yields a discrete constrained Hamilton--Jacobi
structure.

\begin{proposition}[Formal asymptotic-preserving structure with Godunov Hamiltonian]
\label{prop:formal_asymptotic preserving_trait_selection}
Assume that the stationary semi-discrete states satisfy
Assumption~\ref{ass:remainder_stability}, and normalize the phase by
\[
    \max_k u_k^\varepsilon=0 .
\]
Assume also that the stationary phase equation is discretized with
\(\widehat H_G\) defined in \eqref{eq:app_Godunov_H}.  Then, up to a subsequence on
the fixed spatial and trait grids,
\[
    \boldsymbol\rho^\varepsilon\to \boldsymbol\rho^0,
    \qquad
    \boldsymbol u^\varepsilon\to \boldsymbol u^0 .
\]
Moreover, the limiting phase satisfies the discrete constrained
Hamilton--Jacobi system
\[
    \widehat H_G
    \left(
        \delta_\theta^-u_k^0,
        \delta_\theta^+u_k^0
    \right)
    =
    -\mathcal H_k(\boldsymbol\rho^0),
    \qquad
    \max_k u_k^0=0 .
\]
If further \(\mathcal H_k(\boldsymbol\rho^0)\) has a unique zero at
\(k_m\) on the fixed trait grid, then
\[
    k^*  = k_m,
\]
where $ k^* = \arg\max_k u_k^0$, meaning that the selected grid trait is exactly \(\theta_{k_m}\).
\end{proposition}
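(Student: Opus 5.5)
The plan is to use finite-dimensional compactness on the fixed grids, pass to the limit term by term in the stationary phase equation \eqref{def:Godunov_eq}, and then read off the selected trait from the sign structure of $\widehat H_G$ at a maximum point of $u^0$.

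\textbf{Step 1: uniform bounds.} By Assumption~\ref{ass:remainder_stability} and Proposition~\ref{prop:uniform_bound_stationary_mass}, we have $0\le\rho_j^\varepsilon\le C_\rho$. Proposition~\ref{prop:stationary_H_bound} then gives $|\mathcal H_k(\boldsymbol\rho^\varepsilon)|\le C_{\mathcal H}$. The summation argument leading to \eqref{proof:bound_Du} bounds $\max_k|\delta_\theta^+u_k^\varepsilon|$ by $\sqrt{C_{\mathcal H}/\Delta\theta}$. Combined with the normalization $\max_k u_k^\varepsilon=0$, a telescoping sum over at most $N_\theta$ periodic steps gives
\[
|u_k^\varepsilon|\le N_\theta\Delta\theta\sqrt{C_{\mathcal H}/\Delta\theta}=\sqrt{C_{\mathcal H}/\Delta\theta}.
\]
The grids are fixed, so $(\boldsymbol\rho^\varepsilon,\boldsymbol u^\varepsilon)$ lies in a bounded subset of a finite-dimensional space. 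Bolzano--Weierstrass then yields a subsequence with $\boldsymbol\rho^\varepsilon\to\boldsymbol\rho^0$ and $\boldsymbol u^\varepsilon\to\boldsymbol u^0$. Since the set $\{\max_k u_k=0\}$ is closed, we also get $\max_k u_k^0=0$.

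\textbf{Step 2: passing to the limit.} I would go through the three terms of \eqref{def:Godunov_eq} in turn.

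\begin{itemize}
\item The viscous term satisfies $|\varepsilon\delta_\theta^2u_k^\varepsilon|\le 2\varepsilon\sqrt{C_{\mathcal H}/\Delta\theta}/\Delta\theta\to0$, because $\Delta\theta$ is fixed.
\item $\widehat H_G$ is locally Lipschitz, and $\delta_\theta^\pm u_k^\varepsilon\to\delta_\theta^\pm u_k^0$, so the Hamiltonian term converges to $\widehat H_G(\delta_\theta^-u^0_k,\delta_\theta^+u^0_k)$.
\item The only point needing an argument is the continuity of $\boldsymbol\rho\mapsto\mathcal H_k(\boldsymbol\rho)$. By the proof of Proposition~\ref{prop:stationary_H_bound}, $\mathcal H_k(\boldsymbol\rho)$ is the smallest eigenvalue of a matrix that is self-adjoint in the weighted inner product $\langle\cdot,\cdot\rangle_h$ and depends affinely on $\boldsymbol\rho$. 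Eigenvalues of symmetric matrices are $1$-Lipschitz in the operator norm (Weyl). Equivalently, the Rayleigh quotient shows that $\mathcal H_k$ is a minimum of functions that are uniformly Lipschitz in $\boldsymbol\rho$. Hence $\mathcal H_k(\boldsymbol\rho^\varepsilon)\to\mathcal H_k(\boldsymbol\rho^0)$.
\end{itemize}

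Together these give the limiting discrete constrained Hamilton--Jacobi system.

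\textbf{Step 3: selection.} I use the Godunov form for the concave Hamiltonian $-p^2$ given in \eqref{eq:app_Godunov_H}:
\[
\widehat H_G(p^-,p^+)=-\bigl((p^-)_-\bigr)^2-\bigl((p^+)_+\bigr)^2\le0.
\]
Two consequences follow.

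\begin{itemize}
\item The limit equation gives $\mathcal H_k(\boldsymbol\rho^0)=-\widehat H_G\ge0$ for every $k$. This is the discrete analogue of \eqref{eq:H_min_zero}.
\item At any index $k^*$ with $u^0_{k^*}=\max_k u^0_k=0$, we have $\delta_\theta^-u^0_{k^*}\ge0$ and $\delta_\theta^+u^0_{k^*}\le0$. Both $(\cdot)_-$ and $(\cdot)_+$ parts therefore vanish, so $\widehat H_G=0$ and $\mathcal H_{k^*}(\boldsymbol\rho^0)=0$.
\end{itemize}

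If $\mathcal H_k(\boldsymbol\rho^0)$ vanishes only at $k_m$, then every maximizer equals $k_m$. In particular, the argmax is unique and $k^*=k_m$.

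\textbf{Main obstacle.} The delicate point is not the compactness but two structural facts. The first is the continuity of the discrete principal eigenvalue in $\boldsymbol\rho$, handled in Step 2 via the Rayleigh quotient. The second is the exact vanishing of $\widehat H_G$ at a discrete maximum, which depends on the explicit upwinding structure of the Godunov Hamiltonian. A general monotone $\widehat H$ would only give $\widehat H(a,b)\le\widehat H(0,0)=0$ at a maximum, which is not enough to identify $k^*$. This is why the proposition specializes to $\widehat H_G$. The result is labeled formal because it relies on Assumption~\ref{ass:remainder_stability} and on the a priori uniqueness of the zero of $\mathcal H_k(\boldsymbol\rho^0)$, not because of any gap in the limit passage itself.
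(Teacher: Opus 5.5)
Your proposal is correct and follows the paper's proof essentially step for step. The density bound of Proposition~\ref{prop:uniform_bound_stationary_mass} and the Godunov gradient estimate give compactness on the fixed grids, the viscous term drops out in the limit, and $\widehat H_G$ vanishes at any discrete maximizer of $u^0$. Your Rayleigh-quotient argument for the Lipschitz continuity of $\boldsymbol\rho\mapsto\mathcal H_k(\boldsymbol\rho)$ and your observation that $\mathcal H_k(\boldsymbol\rho^0)\ge0$ usefully fill in details the paper leaves implicit.

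One small slip is in your closing remark on general monotone $\widehat H$. At a maximum, where $a\ge0$ and $b\le0$, monotonicity gives $\widehat H(a,b)\ge\widehat H(0,b)\ge\widehat H(0,0)=0$, not $\le$. Your point that this one-sided inequality alone does not identify $k^*$ still holds.
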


\begin{proof}
Proposition~\ref{prop:uniform_bound_stationary_mass} gives a uniform bound on
\(\boldsymbol\rho^\varepsilon\).  Since the spatial grid is fixed, a subsequence
converges to some \(\boldsymbol\rho^0\).  The preceding Godunov phase estimate
and the normalization give a uniform bound on \(\boldsymbol u^\varepsilon\) on
the fixed trait grid.  Hence, up to a further subsequence,
\[
    \boldsymbol u^\varepsilon\to \boldsymbol u^0 .
\]
Passing to the limit in the stationary phase equation gives the limiting
discrete constrained Hamilton--Jacobi system, because
\(\varepsilon\delta_\theta^2u_k^\varepsilon\to0\) on the fixed trait grid and
\[
    \mathcal H_k(\boldsymbol\rho^\varepsilon)
    \to
    \mathcal H_k(\boldsymbol\rho^0).
\]
The normalization also passes to the limit.

It remains to identify the selected grid trait.  Let \(k_*\) be a maximizer of
\(u^0\).  Then
\[
    \delta_\theta^-u_{k_*}^0\ge0,
    \qquad
    \delta_\theta^+u_{k_*}^0\le0 .
\]
By the definition of the Godunov Hamiltonian,
\[
    \widehat H_G
    \left(
        \delta_\theta^-u_{k_*}^0,
        \delta_\theta^+u_{k_*}^0
    \right)
    =
    0 .
\]
The limiting phase equation therefore gives
\[
    \mathcal H_{k_*}(\boldsymbol\rho^0)=0 .
\]
If \(\mathcal H_k(\boldsymbol\rho^0)\) has a unique zero at \(k_m\), then every
maximizer \(k_*\) of \(u^0\) must satisfy \(k_*=k_m\). 
\end{proof}

The fixed-grid asymptotic-preserving structure obtained above is the discrete analogue of the
continuous rare-mutation structure summarized in
Theorem~\ref{prop:continuous_structure}.  The discrete density estimate in Proposition \ref{prop:uniform_bound_stationary_mass}
plays the role of the continuous bound on the spatial marginal.  The discrete
eigenvalue problem \eqref{eq:eigen_discrete} defines the numerical effective Hamiltonian, corresponding
to the continuous principal eigenvalue problem.  
By Proposition \ref{prop:formal_asymptotic preserving_trait_selection}, the limiting discrete
Hamilton--Jacobi system identifies the selected grid trait through the maximizer
of the phase, in the same way that the continuous constrained
Hamilton--Jacobi equation identifies the selected trait in the rare-mutation
limit.  Thus the semi-discrete WKB scheme preserves the essential selection
structure on fixed spatial and trait grids.

The preceding argument is a stationary fixed-grid asymptotic-preserving structure.  In the
computations, the time-dependent WKB system is used as a relaxation dynamics
toward such stationary states.  Thus the semi-discrete analysis identifies the
limiting stationary structure, while the numerical algorithm evolves the WKB
variables toward this discrete eigenvalue and constrained Hamilton--Jacobi
structure.

%===================================================================================================
%	Numerical experiments
%===================================================================================================

\section{Numerical experiments}\label{sec:numerics}

This section presents the fully discrete implementation and numerical
experiments for the WKB formulation.  We focus on its behavior in the
small-mutation regime and compare it with direct density discretizations to
demonstrate the advantage of the WKB formulation.

\subsection{Fully discrete implementation and dual trait-grid implementation}
\label{subsec:dual_grid}

We first describe the time discretization used in the numerical experiments.
For comparison, the original density equation is discretized by the linear
implicit update
\begin{equation}\label{eq:direct_density_fully}
    \varepsilon
    \frac{n_{j,k}^{l+1}-n_{j,k}^l}{\Delta t}
    -
    D_k\delta_x^2n_{j,k}^{l+1}
    -
    \varepsilon^2\delta_\theta^2n_{j,k}^{l+1}
    =
    n_{j,k}^{l+1}(K_j-\rho_j^l),
    \qquad
    \rho_j^l=\Delta\theta\sum_{k=1}^{N_\theta} n_{j,k}^l .
\end{equation}
For the WKB formulation, the phase variable $u$ is advanced by
\begin{equation}\label{eq:wkb_fully_u}
    \frac{u_k^{l+1}-u_k^l}{\Delta t}
    +
    \widehat H(\delta_\theta^-u_k^l,\delta_\theta^+u_k^l)
    -
    \varepsilon\delta_\theta^2u_k^{l+1}
    =
    -H_k^l, 
\end{equation}
where $H_k^l$ is determined by the discrete
principal eigenvalue problem \eqref{eq:eigen_discrete} 
with $ \rho_j^l$ reconstructed via \eqref{eq:E_rho}. 
With \(u^{l+1}\) fixed, the amplitude $W$ is advanced by the linear implicit
update
\begin{equation}\label{eq:wkb_fully_W}
\begin{aligned}
    &\varepsilon
    \frac{W_{j,k}^{l+1}-W_{j,k}^l}{\Delta t}
    -
    D_k\delta_x^2W_{j,k}^{l+1}
    -
    \varepsilon^2\delta_\theta^2W_{j,k}^{l+1}
    +
    2\varepsilon D_\theta\widehat F(W^{l+1},u^{l+1})_{j,k}
    \\
    &\qquad
    =
    W_{j,k}^{l+1}
    \left(
        K_j-\rho_j^l+H_k^l
        -2\varepsilon\delta_\theta^2u_k^{l+1}
    \right).
\end{aligned}
\end{equation}
Since \(u^{l+1}\) is already known, the amplitude update is also linear.

\paragraph{Large time-step relaxation.}
The time-dependent problem is used in this work as a relaxation dynamics
toward the stationary selection state. In the small-mutation regime, the
amplitude equation contains a fast spatial relaxation. Indeed, for frozen
$\rho^l$ and $H_k^l$, the leading $x$-dependent part of the amplitude equation
is governed by the shifted operator
$$
A_k(\rho^l)-H_k^l I,
\qquad
A_k(\rho^l)=-D_k\delta_x^2-\operatorname{diag}(K-\rho^l),
$$
whose null space is generated by the positive principal eigenvector associated
with the discrete effective Hamiltonian. If
$\mu_{\ell,k}^l>0$ denotes a nonzero eigenvalue of
$A_k(\rho^l)-H_k^l I$, where $\ell$ indexes the non-principal spatial modes,
then the implicit treatment in \eqref{eq:wkb_fully_W} damps this mode by a
factor of the form
$$
\left(1+\frac{\Delta t}{\varepsilon}\mu_{\ell,k}^l\right)^{-1}.
$$
Therefore, when $\varepsilon$ is very small, the WKB relaxation can use time
steps much larger than $\varepsilon$ to drive the amplitude rapidly toward the
current principal-eigenvector profile. This is a practical advantage of the WKB
relaxation solver.

\paragraph{Mass correction.}
In the time-dependent computations, we also use a scalar mass correction
during the relaxation process. Integrating the density equation over
\(x\) and \(\theta\), and using the homogeneous Neumann boundary condition
in \(x\) and the periodic boundary condition in \(\theta\), gives
\[
\varepsilon \frac{d}{dt} M_\varepsilon(t)
=
\int_\Omega
\rho_\varepsilon(t,x)
\bigl(K(x)-\rho_\varepsilon(t,x)\bigr)\,dx,
\qquad
M_\varepsilon(t)
=
\int_\Omega\int_{\mathbb T}
n_\varepsilon(t, x,\theta)\,d\theta dx .
\]
After each time step, the provisional density is rescaled by a positive scalar
so that the discrete total mass is consistent with the above mass evolution
equation. In the computations below, this scalar factor is obtained using a
trapezoidal rule in time. For the direct density solver, this amounts to
rescaling \(n^{l+1}\). For the WKB solver, we rescale the amplitude
\(W^{l+1}\), while the phase \(u^{l+1}\) is left unchanged. This correction is
used only to enforce this global mass relation during the relaxation and does not
change the location of the selected trait.

\paragraph{Dual trait-grid implementation.} To improve computational efficiency, we introduce the dual trait-grid
implementation.  The phase variable \(u\) and the amplitude variable \(W\) are
represented on different trait grids.  Let \(\Theta_u\) be the fine phase grid
and \(\Theta_W\) the coarser amplitude grid.  When
\(\Theta_u=\Theta_W\), the method reduces to the standard single-grid WKB
implementation.  When \(\Theta_W\) is coarser, the amplitude \(W\) is stored and
evolved on fewer trait nodes, while the phase \(u\) still resolves the selected
trait on the finer one-dimensional grid.

The coupling between the two trait grids is handled by periodic interpolation
in the trait variable.  Since \(u\) depends only on the trait variable,
refining \(\Theta_u\) is relatively inexpensive.  By contrast, \(W\) depends on
both space and trait.  In \(d\) spatial dimensions, the number of unknowns
scales as
\[
    O\!\left(N_x^d N_\theta^{(W)}+N_\theta^{(u)}\right),
\]
up to the cost of the spatial solvers.  When $N_\theta^{(W)} \ll N_\theta^{(u)}$, the dual-grid strategy reduces the
dominant memory and CPU cost while retaining fine trait resolution for the
selection process.

\subsection{Numerical experiments in 1D}
\label{subsec:numeric_1D}
Unless otherwise stated, the one-dimensional tests use
\[
    \Omega=[0,1],
    \qquad
    \mathbb T=(0,1],
\]
with homogeneous Neumann boundary conditions in the spatial variable and
periodic boundary conditions in the trait variable.  
The carrying capacity and the dispersal rate are chosen as 
\begin{equation}\label{eq:num_K_and_D}
    K(x)=K_0-K_1\cos(2\pi x),
    \qquad
    D(\theta) = D_{\min} + D_1\bigl(1-\cos(2\pi(\theta-\theta_m))\bigr),
\end{equation}
respectively, with $  K_0=1, \, K_1=0.5, \, D_{\min}=0.2,   \, D_1=0.4, \,   \theta_m=0.35$. 
Thus \(D\) has a unique minimum at \(\theta_m\), which is the expected selected
trait in the rare-mutation limit. 
The default initial data are chosen as 
\begin{equation}\label{eq:num_initial_W_new}
    u_k^0=0,
    \qquad
    W_{j,k}^0=n_{j,k}^0=1 .
\end{equation}
Other initial data are specified in the individual tests when needed.

% ========================= Test 1 ============================
\subsubsection{Numerical verification of Assumption~\ref{ass:remainder_stability}}
\label{subsec:test_assumptions_new}

We first examine the one-sided weighted remainder condition used in the
stationary asymptotic-preserving argument.  
For each computation, after the solution has reached a numerical steady state,
we record the pairs
\[
    \left(\rho_j^\varepsilon,\mathcal R_j^{\varepsilon,D}\right),
    \qquad j=0, 1,\cdots, N_x, 
\]
where \(\mathcal R_j^{\varepsilon,D}\) is the weighted remainder \eqref{def:R_D}.

%Figure~\ref{fig:remainder_assumption} shows representative scatter plots of
%\(\mathcal R_j^{\varepsilon,D}\) against \(\rho_j^\varepsilon\) for several
%values of \(\varepsilon\) and for differenxt dispersal functions \(D(\theta)\).  
%In the displayed tests, the recorded weighted remainders
%\(\mathcal R_j^{\varepsilon,D}\) are negative. 
%This is consistent with \eqref{eq:cts_R_expression}, since for small
%$\varepsilon$ the density is concentrated near the selected trait, where $D(\theta)$ is
%minimal and therefore $1/D(\theta)$ has negative curvature. 
%Moreover, their absolute values
%remain small relative to \(\rho_j^\varepsilon\), so any possible positive
%numerical residual at a comparable scale would be negligible for the
%one-sided estimate.  
%This is also expected from \eqref{eq:cts_R_expression}, since the factor $\varepsilon^2$ is in front of the term.  
%This provides supporting numerical evidence for
%Assumption~\ref{ass:remainder_stability} and for the stationary
%asymptotic-preserving argument of the split WKB scheme.

Figure~\ref{fig:remainder_assumption} shows representative scatter plots of
$\mathcal R_j^{\varepsilon,D}$ against $\rho_j^\varepsilon$ for several
values of $\varepsilon$ and for different dispersal functions $D(\theta)$.
In the displayed tests, the recorded weighted remainders
$\mathcal R_j^{\varepsilon,D}$ are negative.  This behavior is consistent with
the continuous expression \eqref{eq:cts_R_expression}.  Indeed, for small
$\varepsilon$, the density is concentrated near the selected trait, where
$D(\theta)$ attains its minimum and, for the choices of $D$ considered here,
$1/D(\theta)$ has negative curvature.  Hence the dominant contribution in
\eqref{eq:cts_R_expression} is expected to be nonpositive.  Moreover, the
magnitudes of the recorded remainders remain small relative to
$\rho_j^\varepsilon$, which is also consistent with the prefactor
$\varepsilon^2$ in \eqref{eq:cts_R_expression}.  This provides supporting
numerical evidence for Assumption~\ref{ass:remainder_stability} and for the
stationary asymptotic-preserving argument of the WKB scheme.

\begin{figure}[htp]
\centering
\includegraphics[width=0.49\textwidth]{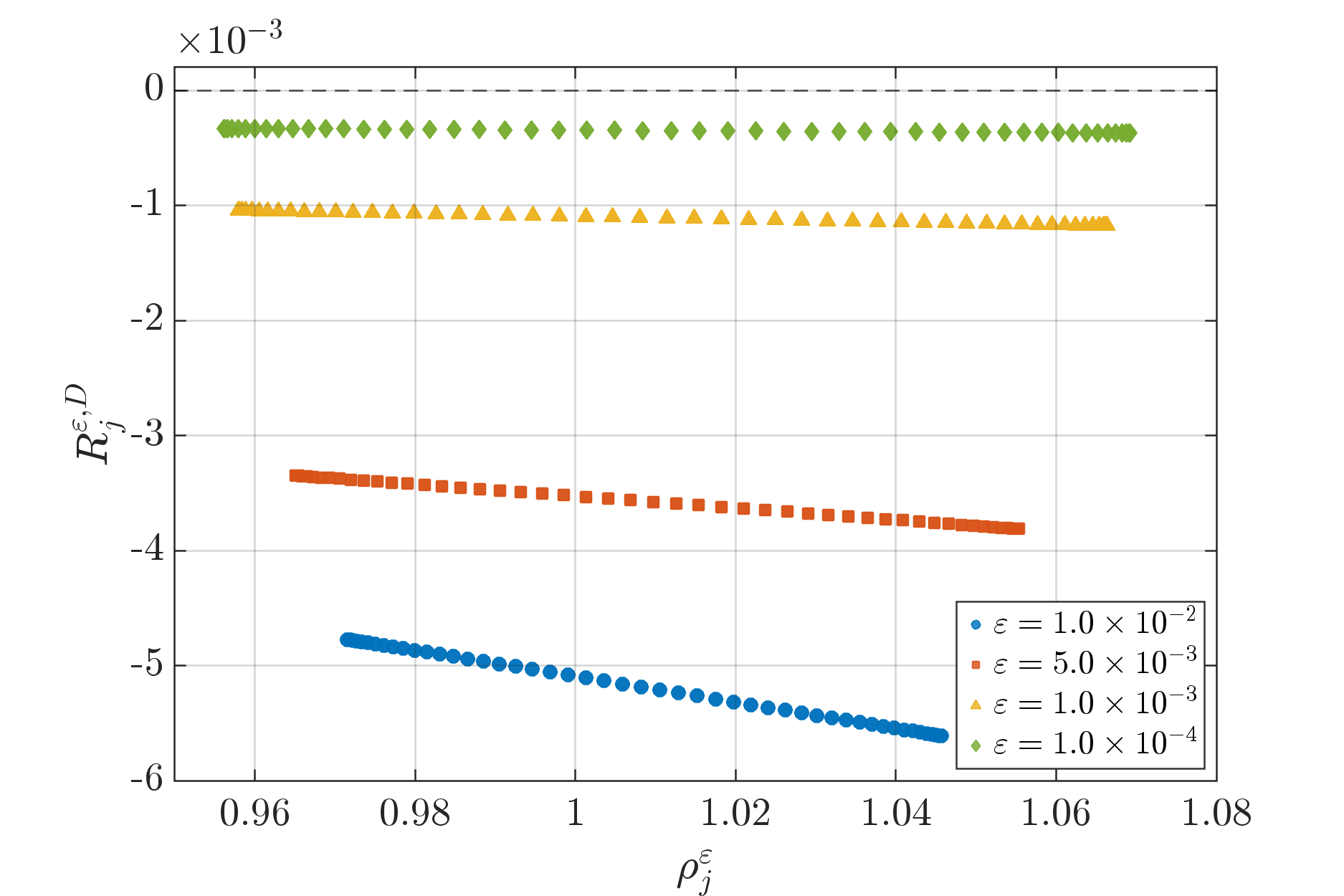}
\includegraphics[width=0.49\textwidth]{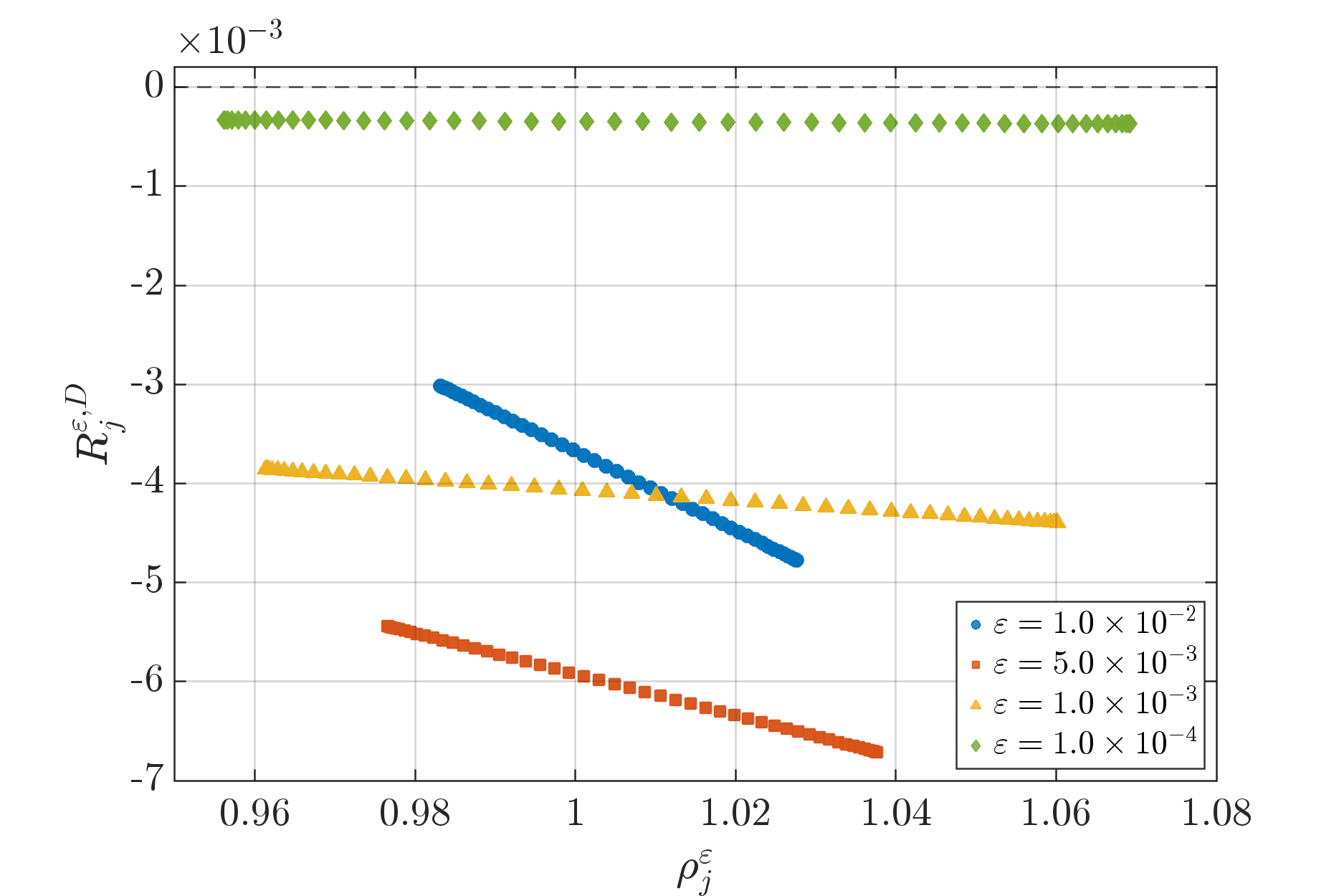}
\caption{Numerical verification of  Assumption~\ref{ass:remainder_stability}.
Left: \(D(\theta)=0.2+0.4\bigl(1-\cos(2\pi(\theta-\theta_m))\bigr)\).
Right:
\(D(\theta)=0.2+0.2\bigl(1-\cos(2\pi(\theta-\theta_m))\bigr)
+0.4\bigl(1-\cos(6\pi(\theta-\theta_m))\bigr)\).
The dashed horizontal line indicates zero.}
\label{fig:remainder_assumption}
\end{figure}

% ========================= Test 2 ============================
\subsubsection{Consistency between the direct density solver and the WKB formulation}
\label{subsec:test_fine_grid_n_new}

We next perform a same-grid consistency check at finite \(\varepsilon\).  The
direct density solver and the WKB solver are run on the same spatial and trait
grids.  The WKB density is reconstructed in the same way as in \eqref{def:n_reconstructed}, 
and the corresponding spatial marginal $\rho_\varepsilon(t,x)$ and trait marginal $P_{\varepsilon}(t, \theta)$, both defined in \eqref{def:rho_P}, are compared with
those obtained from the direct density computation.

Figure~\ref{fig:rho_pk_same_grid} shows the comparison at
 \(t=10\) with \(\varepsilon=10^{-3}\), \(N_x=100\), \(N_\theta=1024\) and $\Delta t = 10^{-3}$.  The two
solutions are almost indistinguishable for both the spatial marginal
\(\rho_\varepsilon(t=10, x)\) and the trait marginal \(P_\varepsilon(t=10, \theta)\).  This indicates that, when
the trait grid is sufficiently fine, the WKB formulation is consistent with the
direct density formulation at the density level.

\begin{figure}[htp]
\centering
\includegraphics[width=0.49\textwidth]{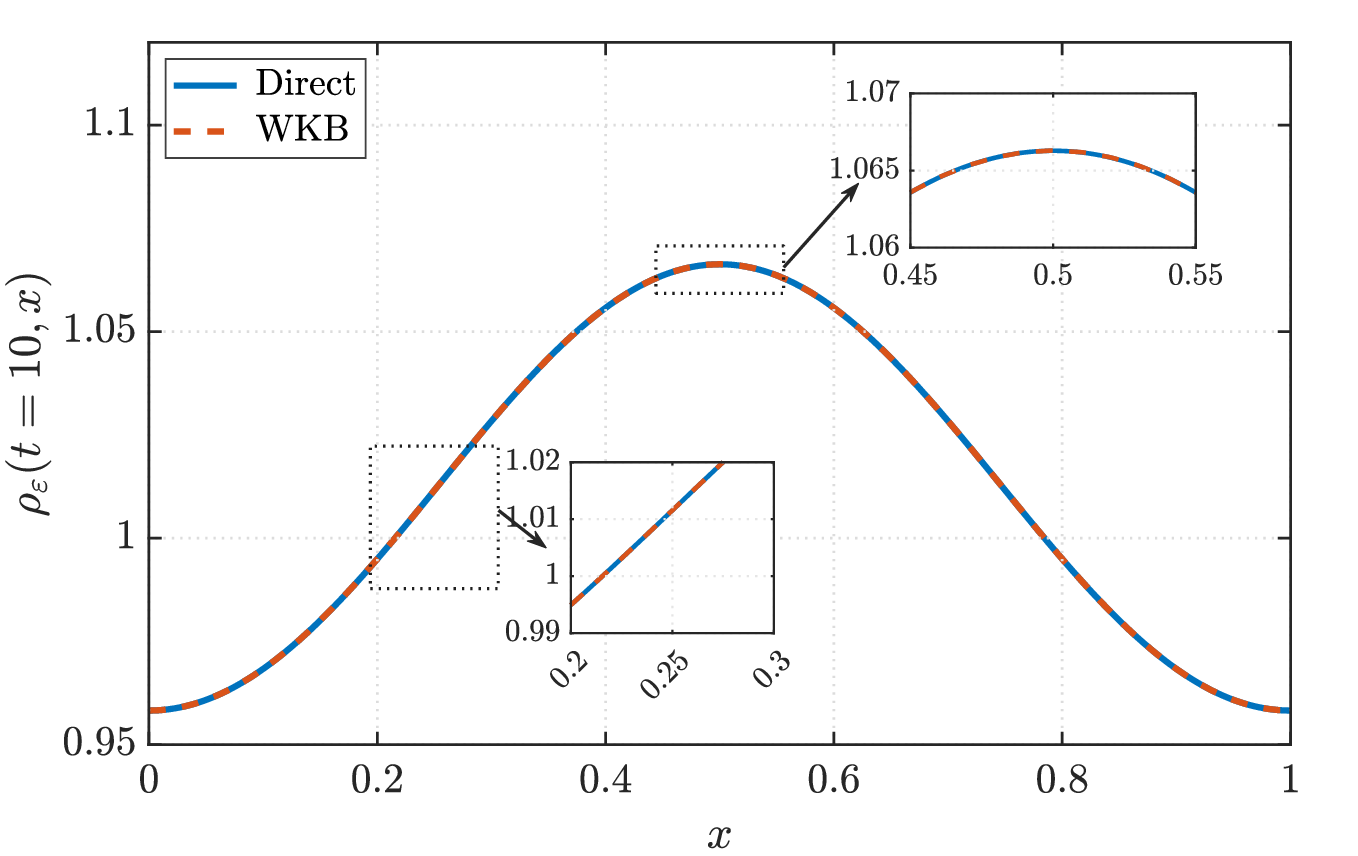}
\includegraphics[width=0.49\textwidth]{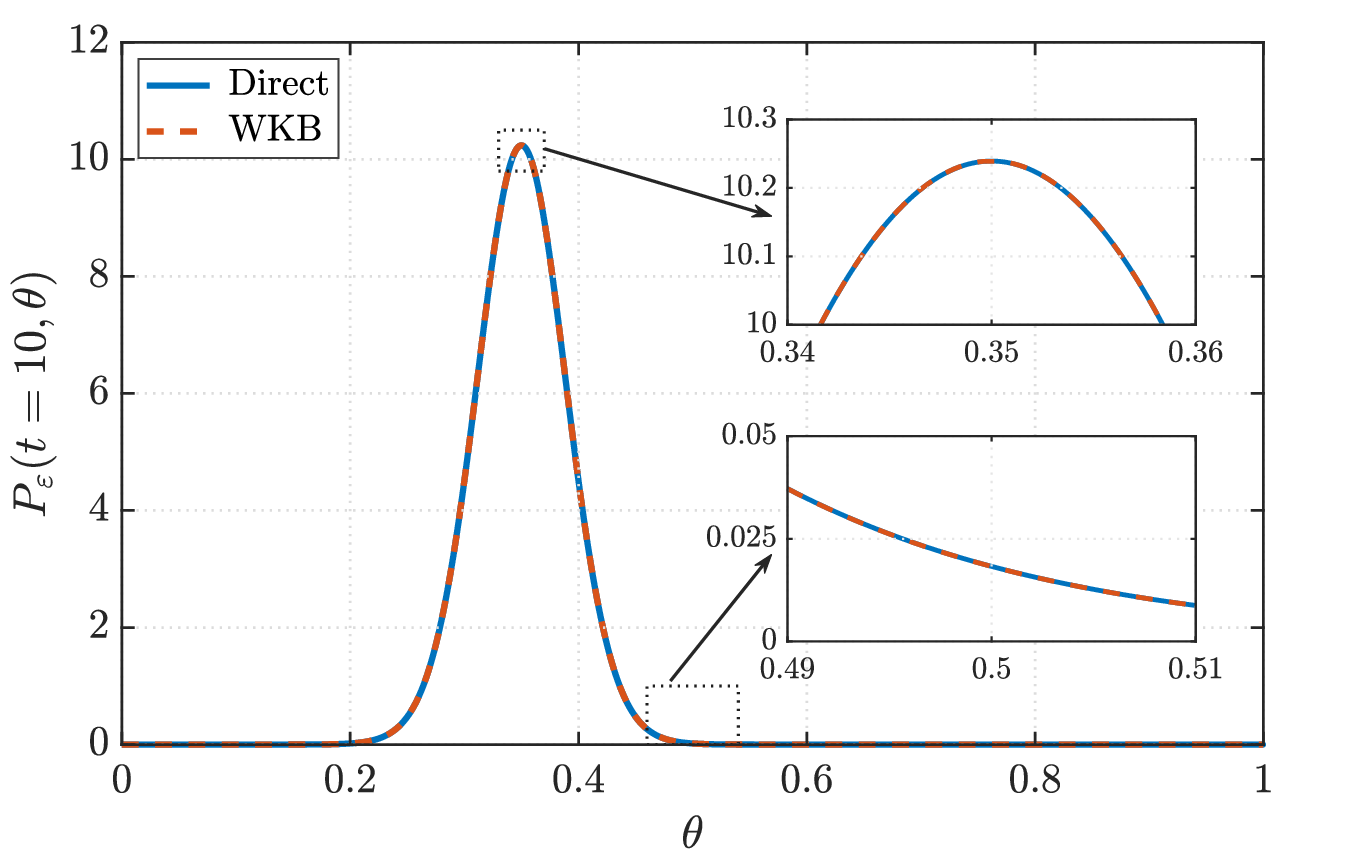}
\caption{Same-grid comparison between the direct density solver and the WKB
formulation.  Left: spatial marginal \(\rho_\varepsilon(t=10, x)\).  Right: trait marginal
\(P_\varepsilon(t=10, \theta)\).  Parameters are \(\varepsilon=10^{-3}\), \(t=10\), \(N_x=100\),
 \(N_\theta=1024\) and $\Delta t = 10^{-3}$.}
\label{fig:rho_pk_same_grid}
\end{figure}

Table~\ref{tab:test3_same_grid_consistency} further reports the same-grid
errors as the trait grid is refined.  The errors decrease steadily with \(N_\theta\).
After the trait grid is moderately refined, the error is reduced by roughly a
factor of four whenever \(N_\theta\) is doubled, which is consistent with the
second-order trait discretization.  This test confirms that the WKB
phase-amplitude formulation does not introduce a density-level inconsistency
when the direct density profile is sufficiently resolved.

\begin{table}[htbp]
\centering
\caption{Same-grid consistency between the direct density solver and the WKB
reconstruction at  \(t=10\) with \(\varepsilon=10^{-3}\), \(N_x=100\) and $\Delta t = 10^{-3}$. Here the errors are measured in the discrete \(\ell^2\)- and
\(\ell^\infty\)-norms over the space--trait grid.}
\label{tab:test3_same_grid_consistency}
\vspace{0.5em}
\begin{tabular}{c|ccccccc}
\hline
 \(N_\theta\)  & 16 & 32 & 64 & 128 & 256 & 512 & 1024\\% & 2048 \\
\hline
 \(\|n_{direct} - n_{_{WKB}}\|_2\)  &1.74E--1 & 3.84E--2 & 8.70E--3  & 2.13E--3  & 5.30E--4 & 1.32E--4 & 3.30E--5\\%  & 8.21E--06  \\
\hline
 \(\|n_{direct} - n_{_{WKB}}\|_\infty\)  &1.42E--1 & 4.59E--2 & 1.03E--2  & 2.63E--3  & 6.54E--4 & 1.63E--4 & 4.08E--5\\%  & 1.02E--05  \\
\hline
\end{tabular}
\end{table}

% ========================= Test 3 ============================
\subsubsection{Long-time concentration in the trait direction}
\label{subsec:test_long_time_concentration_new}
Here we illustrate the concentration process in the trait direction.
According to the rare-mutation asymptotics, the trait marginal
\(P_\varepsilon(t,\theta)\) is expected
to concentrate near the selected trait as time evolves, and the concentration
becomes more pronounced as \(\varepsilon\) decreases.

Figure~\ref{fig:long_time_concentration} shows this behavior.  In the left
panel, \(\varepsilon\) is fixed and the trait marginal is plotted at several
time levels.  
Starting from a broad initial profile, the peak moves toward the
expected selected trait \(\theta_m=0.35\), and the solution approaches a sharply
localized steady profile. In particular, the profiles at the later times are
almost indistinguishable, indicating that a numerical steady state has been
reached.
In the right panel, the final-time marginals are compared for
different values of \(\varepsilon\). The peak becomes narrower and higher as
\(\varepsilon\) decreases, showing the concentration of the trait distribution
toward \(\theta_m\). These results are consistent with the continuous
rare-mutation structure and demonstrate that the WKB formulation captures the
emergence of the selected trait.

\begin{figure}[htp]
\centering
\includegraphics[width=0.48\textwidth]{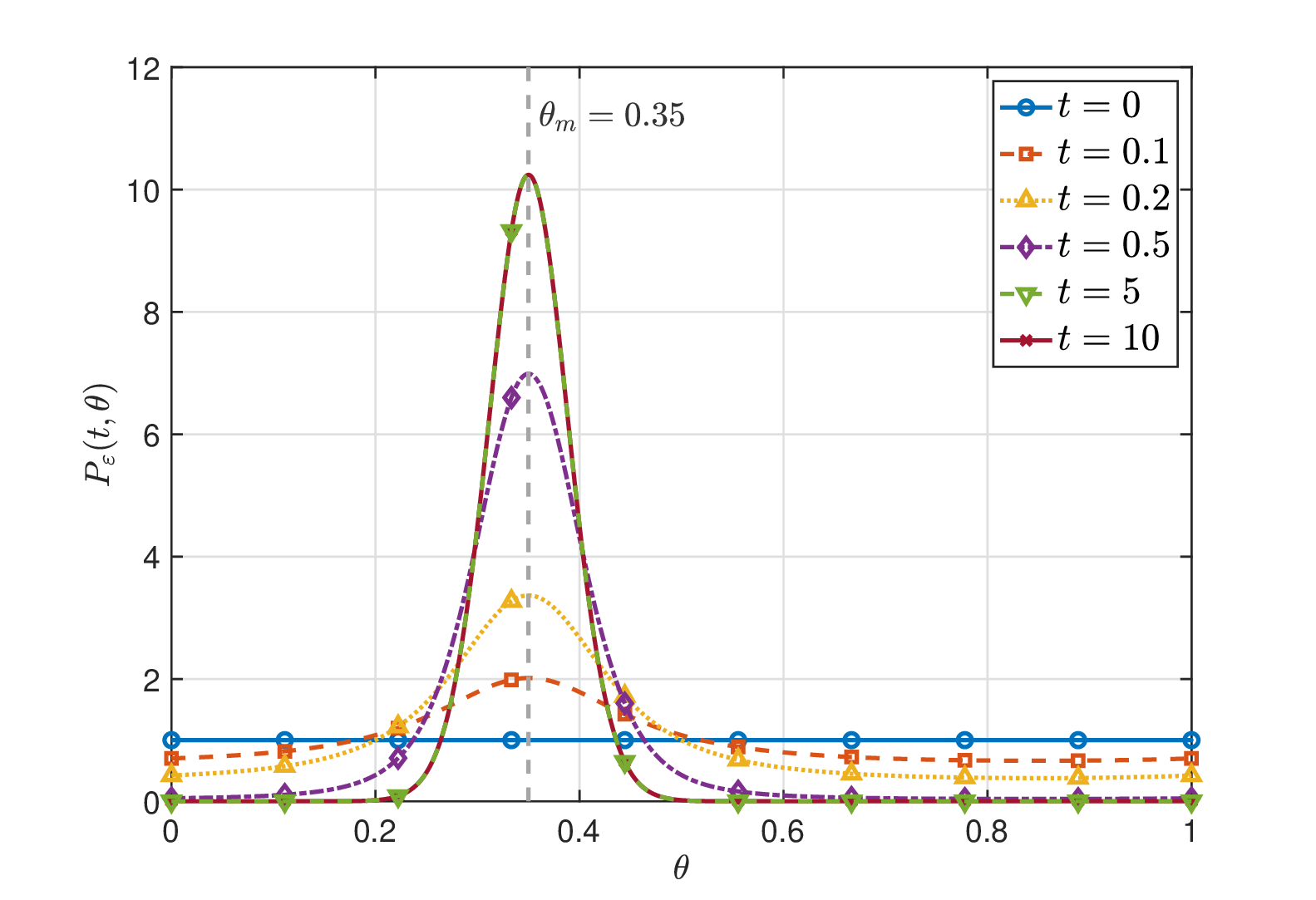}
\includegraphics[width=0.48\textwidth]{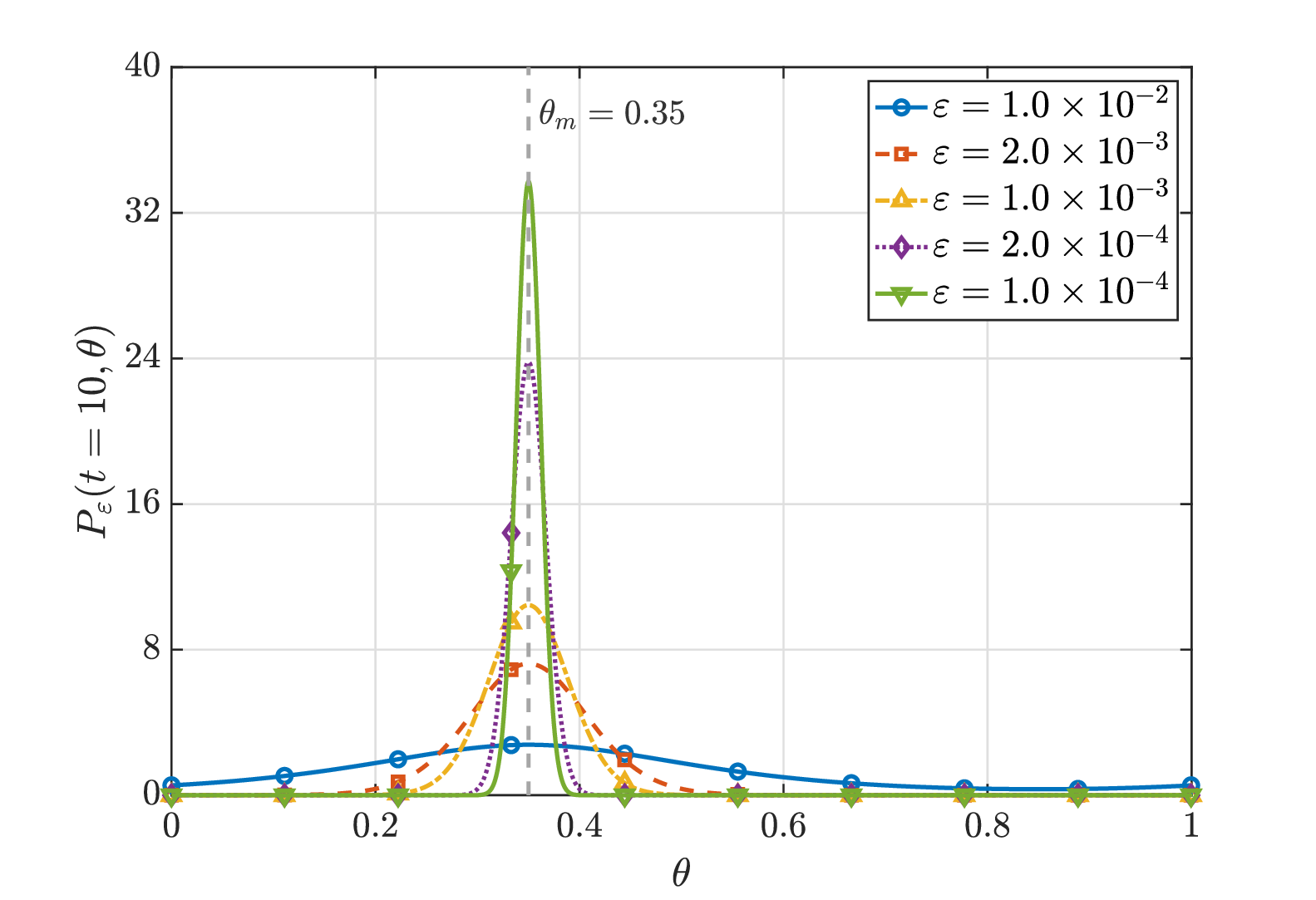}
%\centering
%\includegraphics[width=0.48\textwidth]{long_time_con_nonhomo.eps}
\caption{Long-time concentration in the trait direction. Left: evolution of
\(P_\varepsilon(t,\theta)\) for fixed \(\varepsilon=10^{-3}\), \(N_x=100\),
\(N_\theta=128\), and \(\Delta t=10^{-3}\). Right: final-time trait marginals for
different values of \(\varepsilon\). The vertical dashed line marks the
expected selected trait \(\theta_m=0.35\).}
\label{fig:long_time_concentration}
\end{figure}

% ========================= Test 4 ============================
\subsubsection{WKB versus direct density discretization on coarse trait grids}
\label{subsec:test_wkb_advantage_new}

We next compare the WKB formulation with direct density discretizations
on coarse trait grids, aiming to illustrate the main numerical advantage
of the WKB representation. In the small-mutation regime, the direct
density becomes sharply concentrated in the trait direction, and a coarse
trait grid may fail to resolve the location and shape of the concentration.
By contrast, in the WKB formulation the selected trait is encoded in the
phase variable, which remains much smoother than the density.

For both the plots and the peak diagnostics in this subsection, we use
the same log-based reconstruction of the trait marginal. For a direct
density computation, we  compute
\begin{equation}
\log P_\varepsilon(t,\theta)=\log\left(\int_\Omega n_\varepsilon(t,x,\theta)\,dx \right).
\end{equation}
Similarly, for a WKB computation, we compute
\begin{equation}
\log P_\varepsilon(t,\theta)
=
\frac{u_\varepsilon(t,\theta)}{\varepsilon}
+
\log\left(
\int_\Omega W_\varepsilon(t,x,\theta)\,dx
\right).
\end{equation}
This logarithmic evaluation avoids numerical underflow or overflow when
\(\varepsilon\) is small. When a spatial marginal or a total mass is
computed from the WKB representation, we use a similar
Laplace-type asymptotic approximation to evaluate sums involving
\(\exp(u_\varepsilon(t,\theta)/\varepsilon)\) \cite{WangWong2007, Wong2001}.
The interpolation is then applied to \(\log P_\varepsilon(t,\theta)\), rather than to \(P_\varepsilon(t,\theta)\)
itself.

Figure~\ref{fig:wkb_vs_direct_coarse} shows the comparison for decreasing
values of \(\varepsilon\). The direct density solutions are plotted for
several trait resolutions, while the WKB result is shown as the reference
WKB computation on the same physical problem. 
For moderate
\(\varepsilon\), both approaches give comparable results once the trait
grid is sufficiently refined. As \(\varepsilon\) decreases, however, the
direct density solver becomes much more sensitive to the trait resolution.
In particular, coarse direct grids give visibly shifted or broadened trait
marginals. The WKB formulation remains well aligned with the selected
trait because the phase variable resolves the concentration location
without directly resolving the full density profile.

\begin{figure}[htp]
\centering
\includegraphics[width=0.45\textwidth]{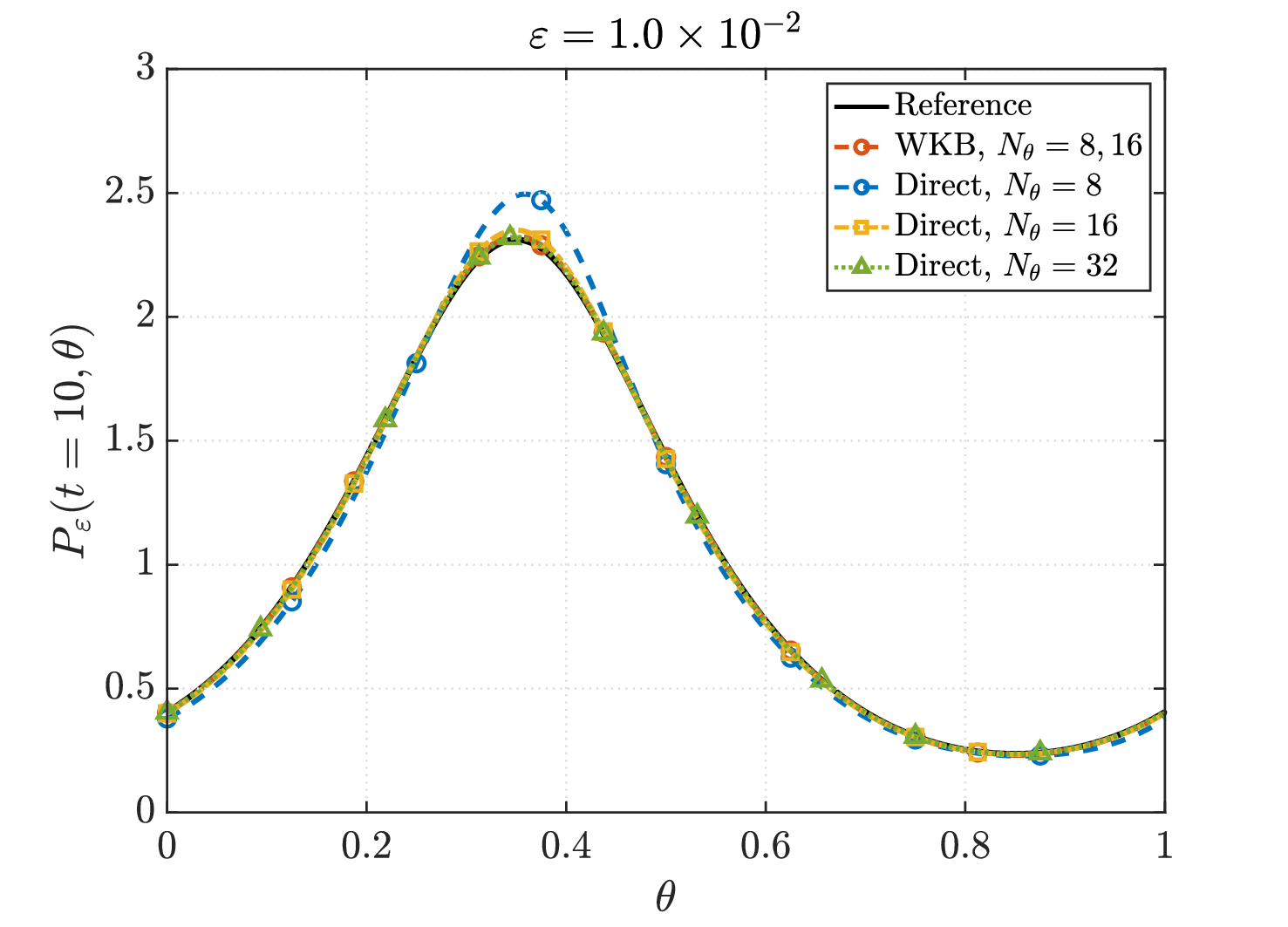}
\includegraphics[width=0.45\textwidth]{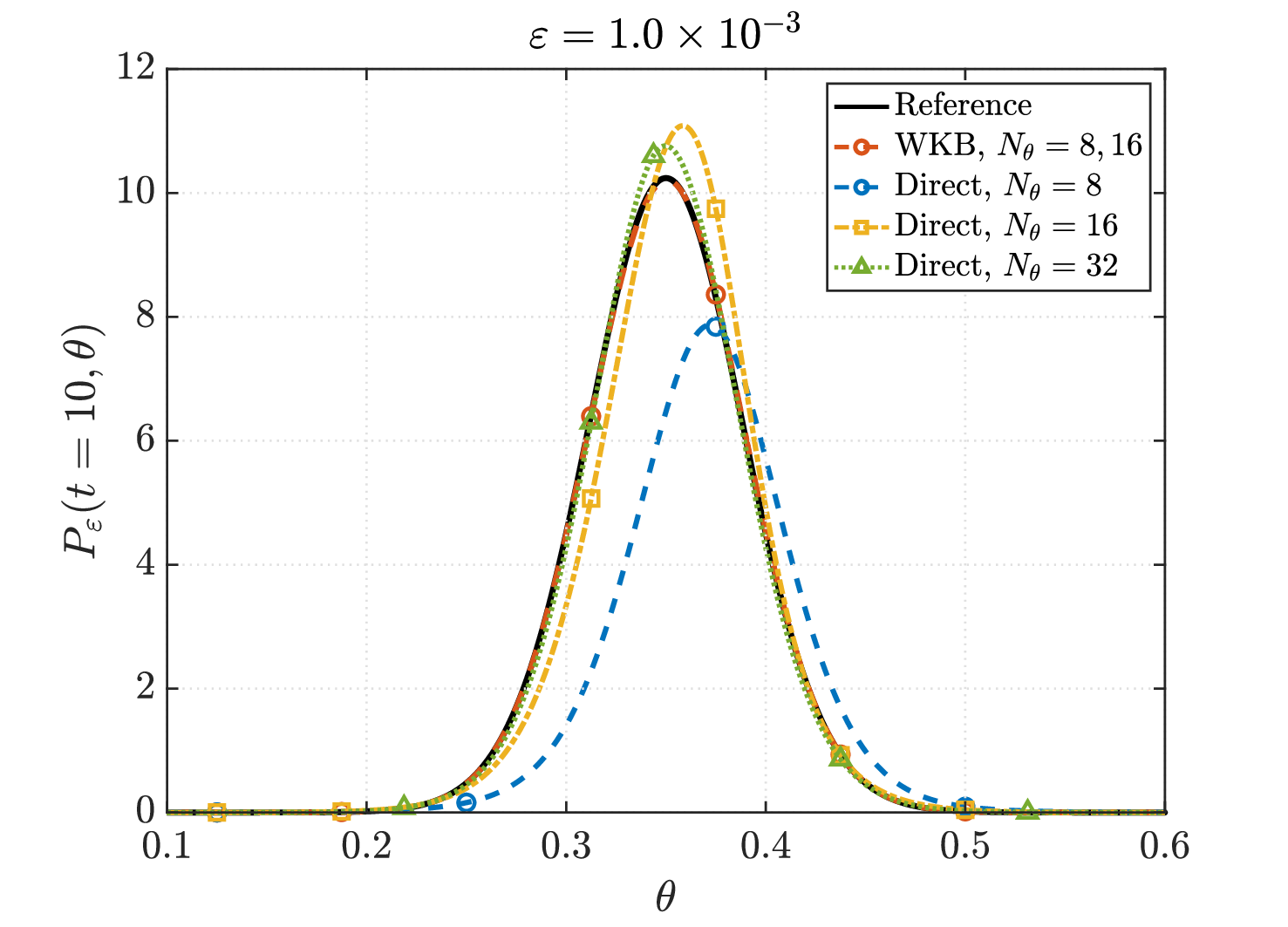}
\centering
\includegraphics[width=0.45\textwidth]{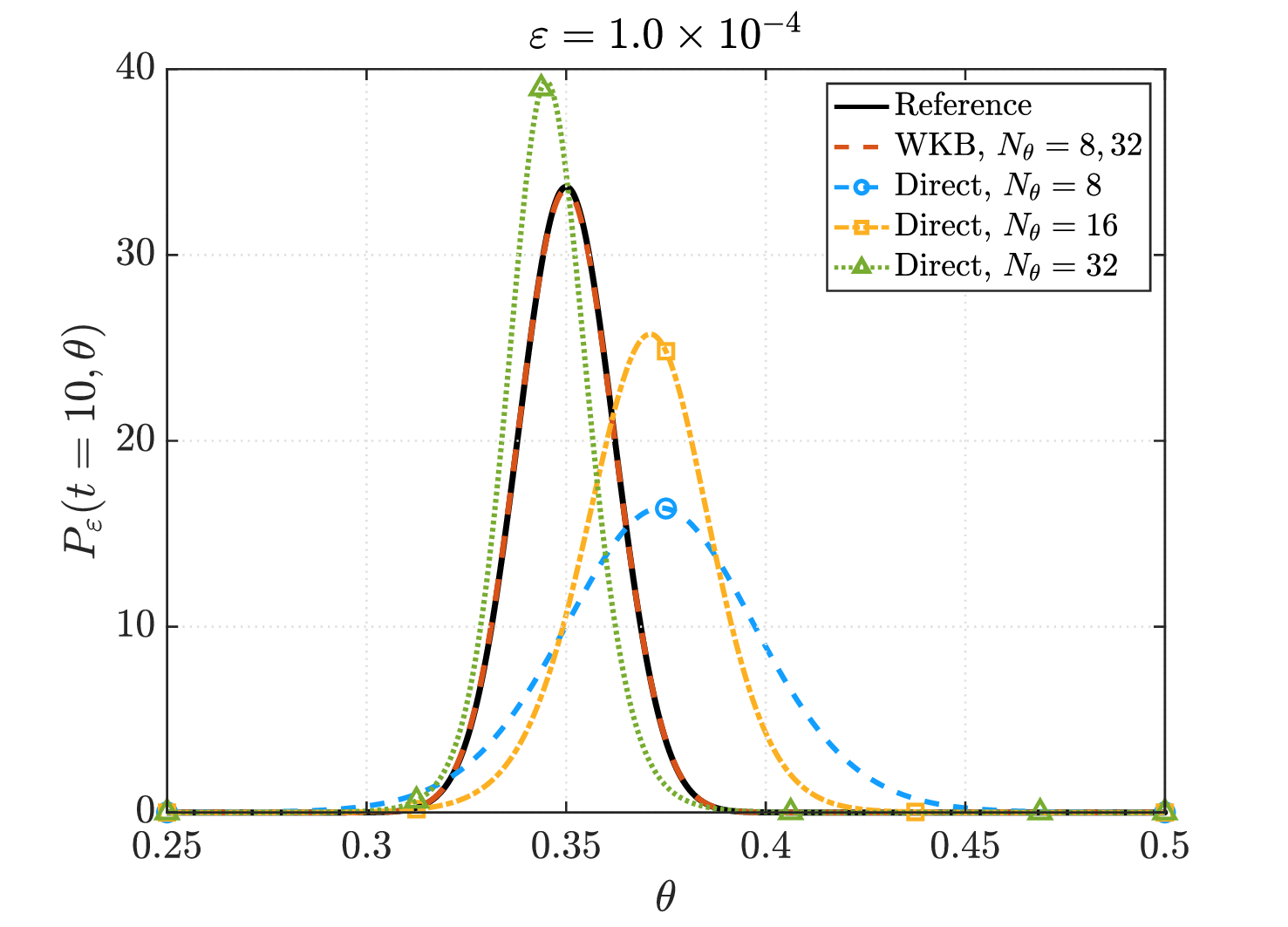}
\includegraphics[width=0.45\textwidth]{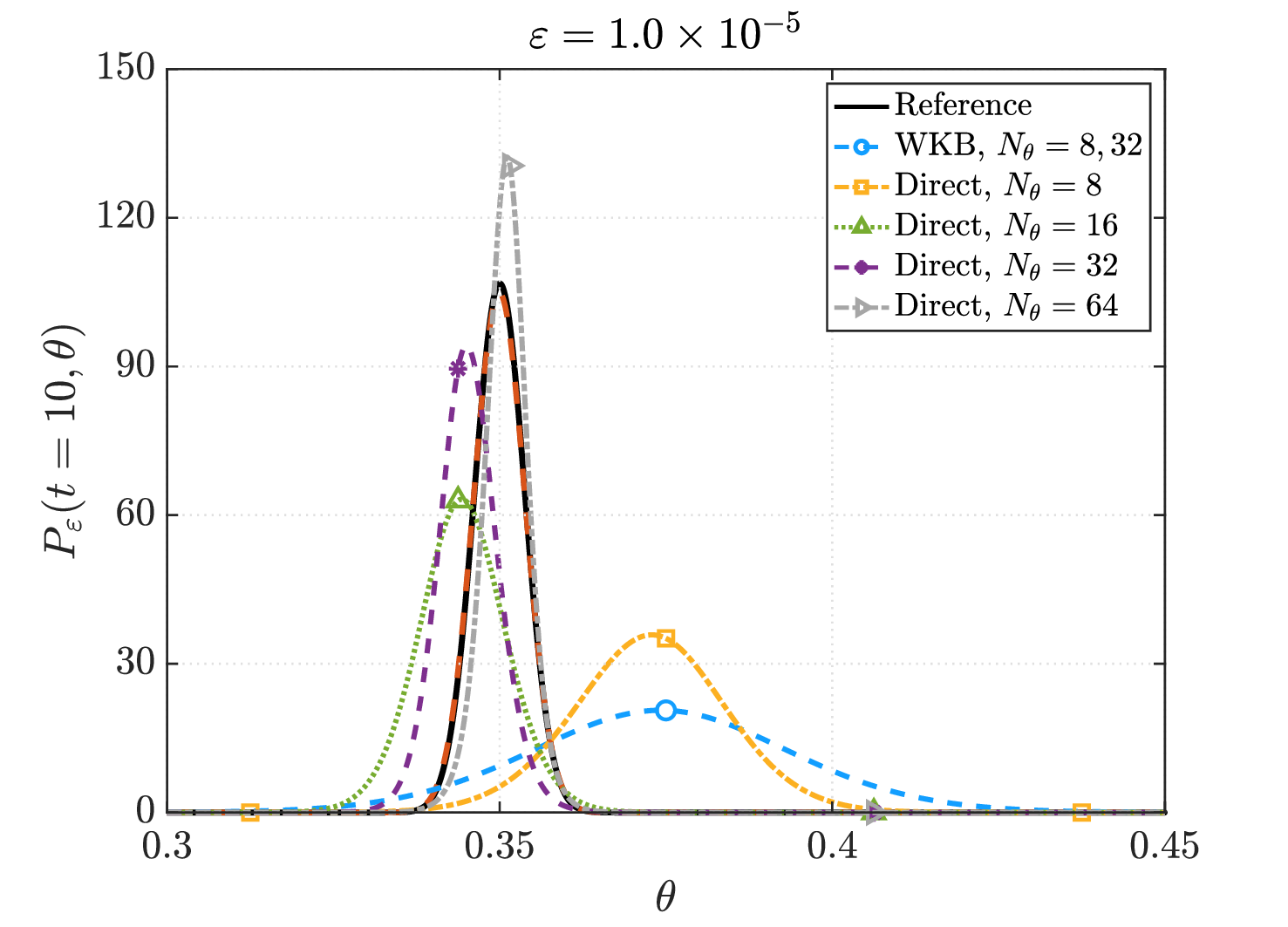}
\caption{Comparison between the WKB formulation and direct density
discretizations on coarse trait grids. The four panels correspond to \(\varepsilon=10^{-2},10^{-3},10^{-4},10^{-5}\), respectively. All trait marginals are reconstructed using the same
log-based postprocessing described in the text. The direct density solver
requires increasingly fine trait grids to resolve the concentrated
marginal, whereas the WKB formulation continues to capture the selected
trait through the phase variable.}
\label{fig:wkb_vs_direct_coarse}
\end{figure}

This experiment shows that the benefit of the WKB formulation becomes more
pronounced as \(\varepsilon\) decreases. The result is consistent with the
asymptotic picture that the density-level trait marginal develops a narrow
concentration, while the WKB phase remains a more regular quantity from
which the selected trait can be identified on a fixed trait grid.

To quantify this comparison, we extract several diagnostics from the reconstructed trait marginal $P_\varepsilon(t,\theta)$, 
including the peak-location error \(e_\theta\), the peak-height error \(e_{\rm PH}\), and the peak-width error \(e_{\rm PW}\), where the peak width is measured by the full width at half maximum.
Here we remark that, in the small \(\varepsilon\) regime, post-processing techniques such as a local quadratic fit of \(\log P_\varepsilon(t,\theta)\) near the peak may be applied to further improve the performance  of the WKB formulation. 

We next examine two sources of computational efficiency gained by the WKB formulation.  
First, the WKB representation reduces the
dominant number of space--trait degrees of freedom needed to resolve the
concentrated marginal.  
We compare direct density discretizations with \(N_\theta^{(n)}\), single-grid WKB computations with \(N_\theta^{(u)}=N_\theta^{(W)}\), and dual-grid WKB computations with \(N_\theta^{(W)}<N_\theta^{(u)}\). Here the superscript specifies which variable the trait grid is associated with. The effective number of unknowns is measured by 
\[ N_{\rm dof}^{\rm dir}=(N_x+1) \cdot N_\theta^{(n)} \]
 for the direct density solver, and by 
 \[ N_{\rm dof}^{\rm WKB}=(N_x+1)\cdot N_\theta^{(W)}+ N_\theta^{(u)} \]
  for the WKB formulation. 
This count reflects the fact that the amplitude \(W\) carries all spatial degrees of freedom, whereas the phase \(u\) is only a one-dimensional trait variable. Using a single-grid WKB solution with \(N_\theta=1024\) as the reference,
Figure~\ref{fig:wkb_accuracy} reports the errors in the selected trait and peak diagnostics against
the effective number of degrees of freedom. The single-grid WKB results converge rapidly as the trait grid is refined. The dual-grid WKB results show that one can keep a relatively fine phase grid while using a much coarser amplitude grid, with only a mild loss of accuracy. In contrast, the direct density solver requires substantially more trait degrees of freedom to resolve the concentrated marginal. This confirms one main computational advantage of the WKB formulation, namely that the concentration location is resolved through the low-dimensional phase, rather than by evolving the full space-trait density on a fine trait grid.

Second,  the WKB representation also allows a larger stable time step in the
small-$\varepsilon$ regime.
In Figure~\ref{fig:time_step_size},  we compare the admissible time steps of the WKB and
direct density solvers over a range of $\varepsilon$ for a
fixed trait resolution.  
The WKB formulation
remains stable for larger time steps, especially when $\varepsilon$ is small,
whereas the direct density solver becomes more restrictive.  
This improvement is closely related to the asymptotic-preserving structure of
the WKB scheme. 
In the rare-mutation regime, the direct solver has to evolve an exponentially
concentrated density profile.  The WKB scheme instead separates the leading
exponential scale and preserves the limiting Hamilton--Jacobi structure at the
discrete level.  Hence its admissible time step is less constrained by the
singular concentration scale of the original density variable.

Overall, the observed
computational gain is not only due to the reduced effective number of unknowns,
but also to the improved time-step robustness of the WKB reformulation in the
rare-mutation regime.

\begin{figure}[htp]
\centering
\includegraphics[width=1.03\textwidth]{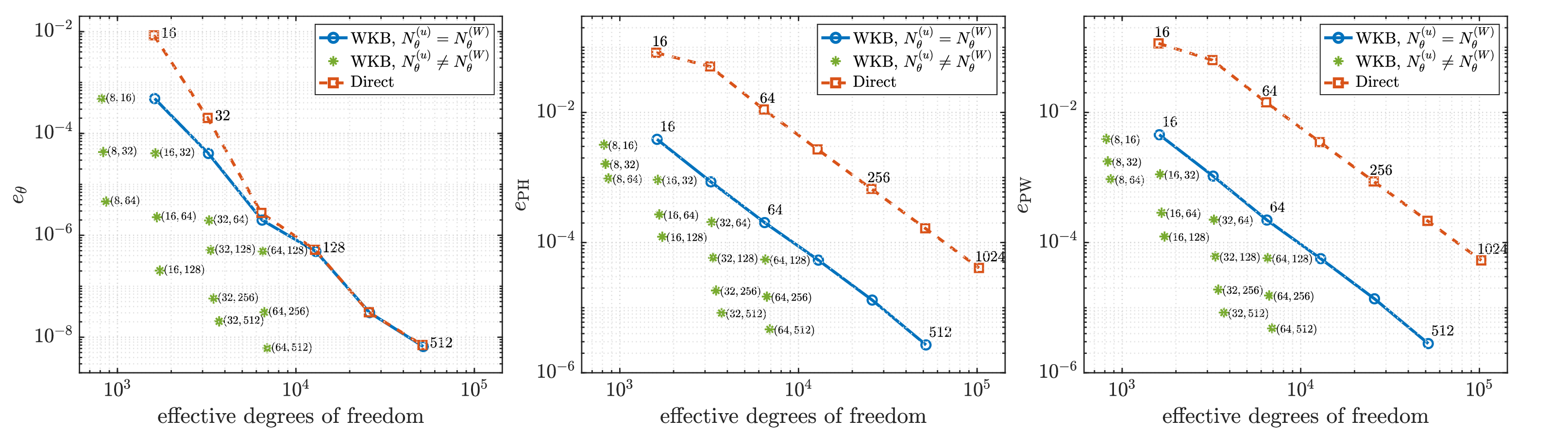}
\caption{Efficiency comparison in terms of effective degrees of freedom.  The
direct density solver uses
$N_{\rm dof}^{\rm dir}=(N_x+1)\cdot N_\theta^{(n)}$, while the WKB formulation uses
$N_{\rm dof}^{\rm WKB}=(N_x+1) \cdot N_\theta^{(W)}+N_\theta^{(u)}$.  The dual-grid WKB
runs use a fine phase grid and a coarser amplitude grid.  The results show that
the WKB formulation can achieve comparable or better peak diagnostics with much fewer
 degrees of freedom.}
\label{fig:wkb_accuracy}
\end{figure}

\begin{figure}[htp]
\centering
\includegraphics[width=0.47\textwidth]{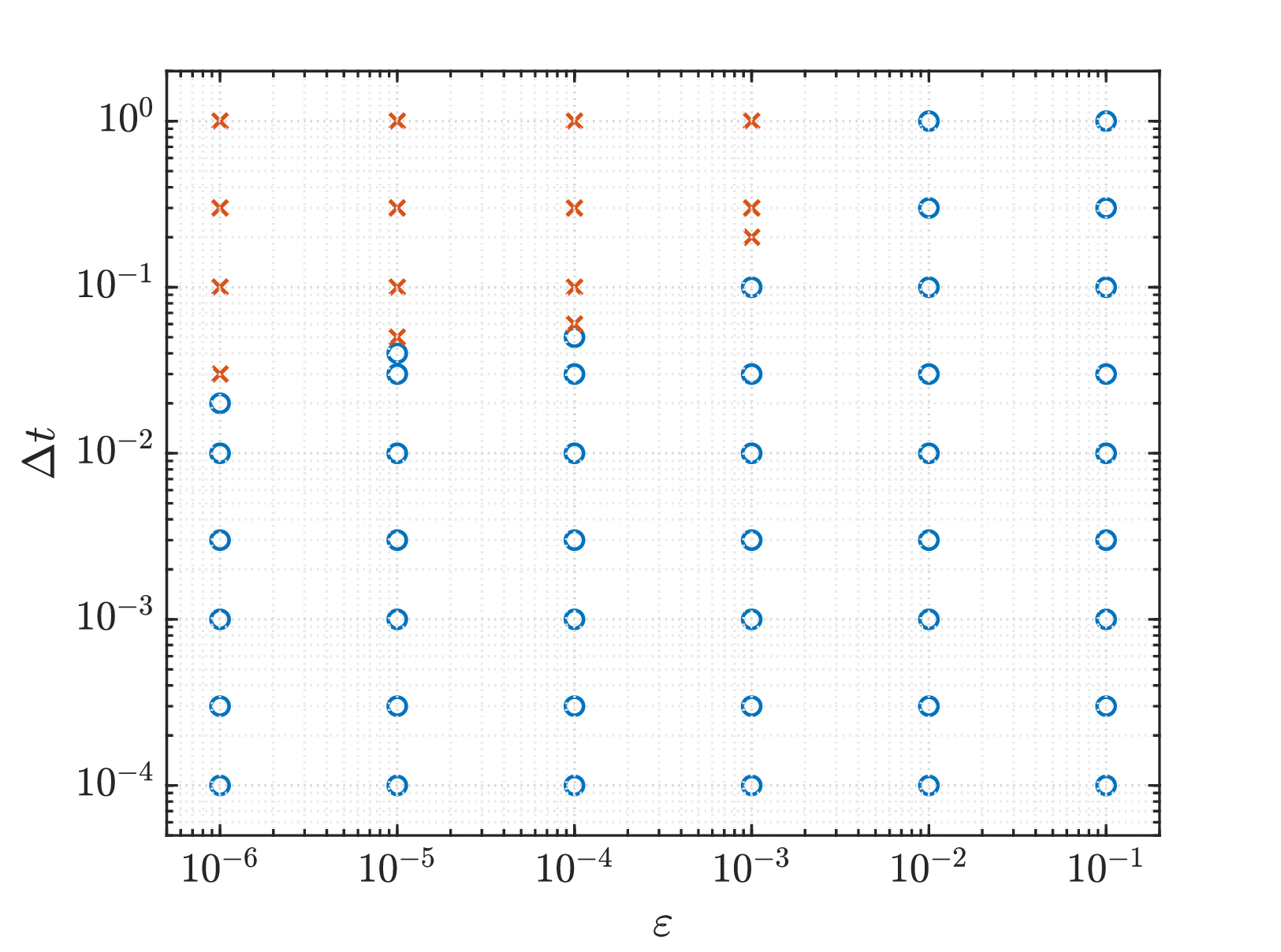}
\includegraphics[width=0.47\textwidth]{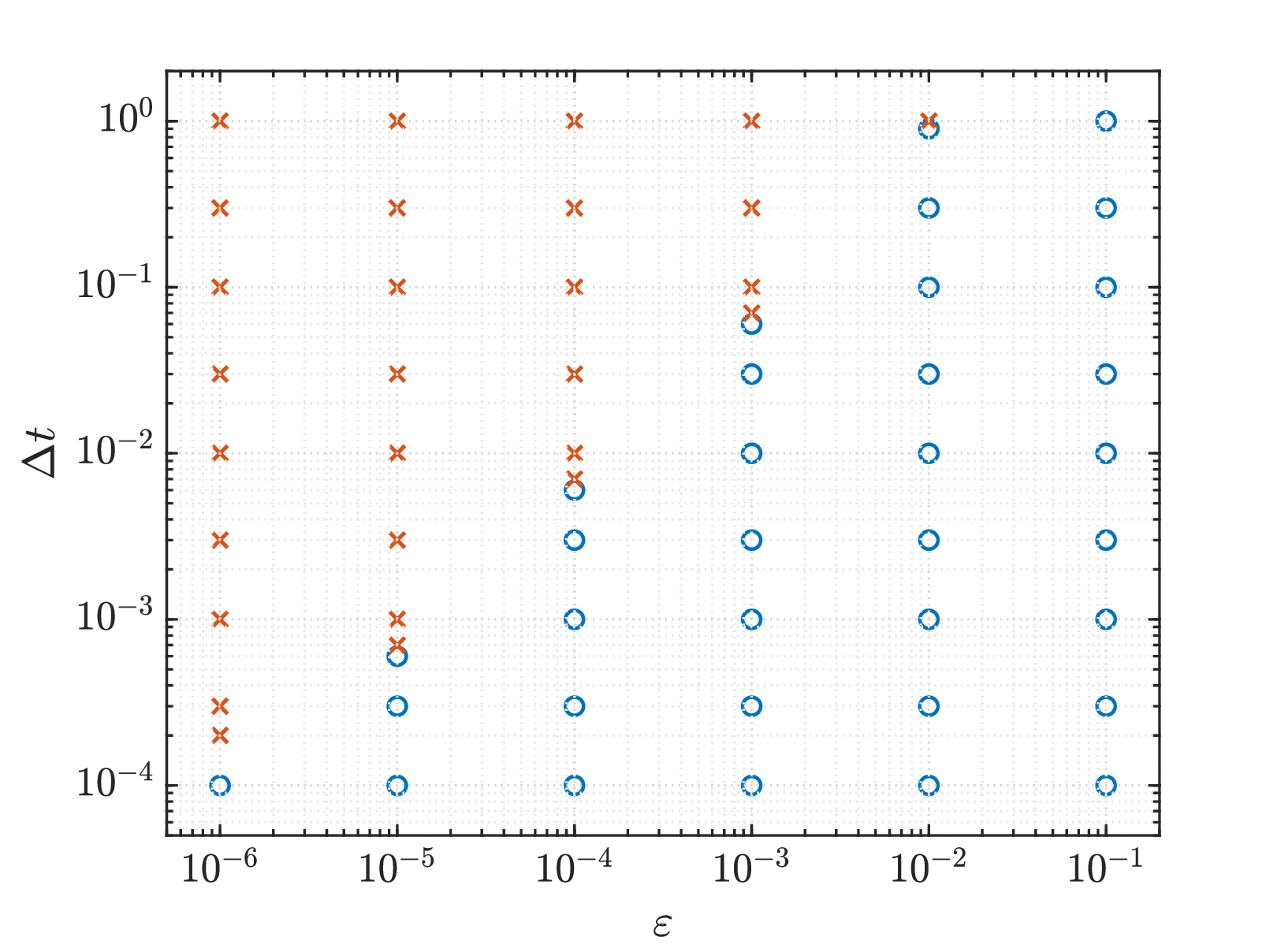}
\caption{Time-step robustness for fixed trait resolution $N_\theta=64$.  Left: WKB
solver.  Right: direct density solver.  Blue circles denote stable computations,
whereas red crosses denote unstable computations.  The WKB
formulation permits larger admissible time steps in the small-$\varepsilon$
regime.}
\label{fig:time_step_size}
\end{figure}

\subsection{Two-dimensional spatial experiment}\label{subsec:test_2d_new}

We finally present a two-dimensional spatial experiment to illustrate that the
WKB formulation can be applied beyond the one-dimensional spatial setting.  The
computational domain is $\Omega=[0,1]^2$, and we choose the spatial growth
rate and the dispersal function as
$$
K(x,y)=K_0+K_1\cos(2\pi x)\cos(2\pi y),
\qquad
D(\theta)=D_{\min}+D_1\bigl(1-\cos(2\pi(\theta-\theta_m))\bigr).
$$
In the computation, we take
$$
K_0=1,\qquad K_1=0.5,\qquad D_{\min}=0.2,\qquad D_1=0.4,
\qquad \theta_m=0.35 .
$$

Figure~\ref{fig:example_2d} shows the numerical result with \(\varepsilon=10^{-3}\), \(N_x=N_y=50\), and \(N_\theta=64\) at \(t=20\). The left panel displays the spatial marginal density $\rho_\varepsilon(t, x,y)$,
which reflects the spatial heterogeneity induced by $K(x,y)$.  The right panel
shows the corresponding trait marginal $P_\varepsilon(t, \theta)$.  The trait distribution is
sharply localized near the expected selected trait $\theta_m=0.35$, indicating
that the WKB computation captures the same rare-mutation concentration
mechanism in the two-dimensional spatial case.

\begin{figure}[htp]
\centering
\includegraphics[width=1.01\textwidth]{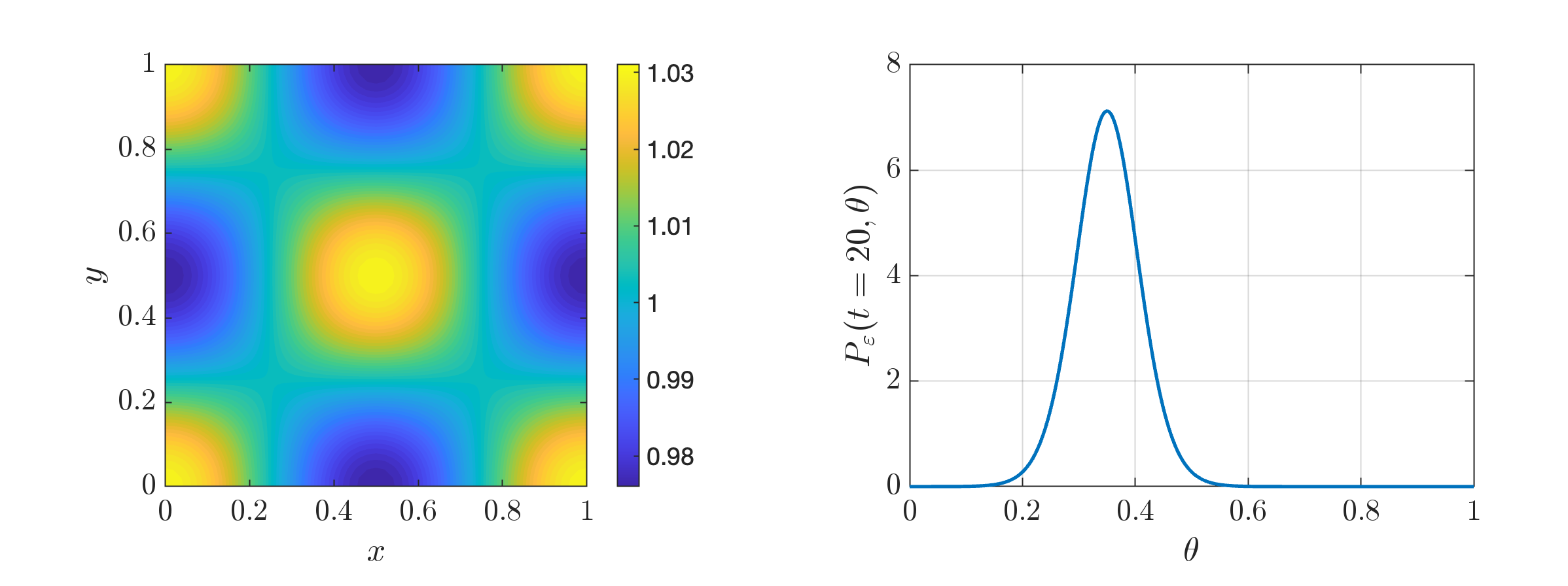}
\caption{Two-dimensional spatial experiment with
\(\varepsilon=10^{-3}\), \(N_x=N_y=50\), and \(N_\theta=64\) at \(t=20\).
Left: spatial marginal density \(\rho_\varepsilon(t,x,y)\).
Right: trait marginal \(P_\varepsilon(t,\theta)\).}
\label{fig:example_2d}
\end{figure}

%=============================================================
\section{Conclusion} \label{sec:conclusion}

In this paper, we developed a WKB-based fixed-grid method for a dispersal
evolution model in the rare-mutation regime.  
We proved positivity preservation and established a stationary
fixed-grid asymptotic-preserving structure under a one-sided weighted remainder
condition.  
Numerical experiments show that the method accurately recovers the
selected trait on coarse trait grids, reduces the dominant space--trait degrees
of freedom through a dual-grid implementation, and allows larger stable time
steps for small $\varepsilon$.  A two-dimensional test further illustrates its
ability to capture spatial heterogeneity and trait concentration beyond the
one-dimensional setting.

\appendix
\setcounter{equation}{0}
\renewcommand{\theequation}{\Alph{section}.\arabic{equation}}

\section{Numerical Hamiltonians and conservative fluxes}
\label{app:hamiltonian_flux}

For the monotone theory in Section~3, the numerical Hamiltonian
\(\widehat H(a,b)\) is assumed to be consistent with \(-p^2\), nondecreasing in
its first argument, and nonincreasing in its second argument. For any real number \(r\), let
\[
    r_+ := \max\{r,0\},\qquad r_- := \min\{r,0\}.
\]
One typical 
choice is the Godunov Hamiltonian
\begin{equation}
\label{eq:app_Godunov_H}
    \widehat H_G(a,b)
    =
    -(a_-)^2-(b_+)^2.
\end{equation}
In practice, high-order one-sided WENO reconstructions can be applied \cite{JiangPeng, OsherShu, ShuHJ}.  
This is a numerical option for
improving resolution of the phase profile.  

The conservative flux in the equation of $W$ can be chosen as 
\[
    D_\theta\widehat F(W,u)_{j,k}
    =
    \frac{\widehat F_{j,k+1/2}-\widehat F_{j,k-1/2}}{\Delta\theta},
    \qquad
    a_{k+1/2}
    =
    -\frac{u_{k+1}-u_k}{\Delta\theta},
\]
with the upwind flux 
\begin{equation}
\label{eq:app_upwind_flux}
    \widehat F^{\rm up}_{j,k+1/2}
    =
    (a_{k+1/2})_+ W_{j,k}
    +
    (a_{k+1/2})_- W_{j,k+1}.
\end{equation}
%A Lax--Friedrichs flux is
%\begin{equation}
%\label{eq:app_LF_flux}
%    \widehat F^{\rm LF}_{j,k+1/2}
%    =
%    \frac12 a_{k+1/2}(W_{j,k}+W_{j,k+1})
%    -
%    \frac12\alpha_{k+1/2}(W_{j,k+1}-W_{j,k}),
%    \qquad
%    \alpha_{k+1/2}\ge |a_{k+1/2}|.
%\end{equation}
%Both fluxes are conservative in the periodic trait variable.  The upwind flux
%also satisfies the quasi-positivity condition used in
%Lemma~\ref{lem:positivity_W}.

\section{Remainder bound for a conservative density discretization}
\label{app:remainder}

We give a simple situation in which Assumption~\ref{ass:remainder_stability} can be
verified.  Suppose that the reconstructed density satisfies the conservative
semi-discrete density equation
\begin{equation}
\label{eq:app_density_compatible_n}
    \varepsilon\frac{\mathrm d}{\mathrm dt}n_{j,k}^\varepsilon
    -
    D_k\delta_x^2 n_{j,k}^\varepsilon
    -
    \varepsilon^2\delta_\theta^2 n_{j,k}^\varepsilon
    =
    \bigl(K_j-\rho_{j}^\varepsilon\bigr)n_{j,k}^\varepsilon .
\end{equation}
Then periodic summation by parts gives
\[
    \mathcal R_j^{\varepsilon,D}
    =
    \varepsilon^2\Delta\theta
    \sum_k
    n_{j,k}^\varepsilon
    \delta_\theta^2\left(\frac{1}{D_k}\right).
\]
In this case, 
\[
    \bigl(R_j^{\varepsilon,D}\bigr)_+
    \le
    \varepsilon^2 C_{D,h}\rho_j^\varepsilon,
    \qquad
    C_{D,h}
    =
    \max_k
    \left|
    \delta_\theta^2\left(\frac{1}{D_k}\right)
    \right|.
\]
This verifies Assumption~\ref{ass:remainder_stability} with \(C_{R,h}=C_{D,h}\) for $\varepsilon\in(0,1]$ and
\(r_h=0\).

% =============================================================================
%                                                            ACKNOWLEDGEMENTS
% =============================================================================
\bigskip
\noindent{\large\bf Acknowledgements.}  \ 
X. Ruan acknowledges support  from the National Natural Science Foundation of China under grant  12201436 and the R\&D Program of Beijing Municipal Education Commission under grant KM202310028016.
W. Huang acknowledges support  from the National Natural Science Foundation of China under grant  12001034.

\end{document}